\documentclass[12pt,twoside]{amsart}
\usepackage{amsmath, amsthm, amscd, amsfonts, amssymb, graphicx}
\usepackage[bookmarksnumbered, plainpages]{hyperref}
\usepackage[latin1]{inputenc}

\newtheorem{thm}{Theorem}[section]

\newtheorem{lem}[thm]{Lemma}
\newtheorem{prop}[thm]{Proposition}
\newtheorem{defn}[thm]{Definition}

\numberwithin{equation}{section}

\begin{document}

\title{\bf  Several new two-variable elliptic genera }
\author{Yong Wang}

\thanks{{\scriptsize
\hskip -0.4 true cm \textit{2010 Mathematics Subject Classification:}
58C20; 57R20; 53C80.
\newline \textit{Key words and phrases:} Two-variable elliptic genus; Jacobi forms; modular forms; anomaly cancellation formulas }}

\maketitle

\begin{abstract}
In this paper, we define several new two-variable elliptic genera for even dimensional spin manifolds which are the index for some Dirac operators and holomorphic $SL(2,Z)$, $\Gamma_0(2)$, $\Gamma^0(2)$, $\Gamma _{\theta}$-Jacobi forms. By these Jacobi forms, we can get some $SL(2,{\bf Z})$ and $\Gamma^0(2)$ modular forms. By these $SL(2,{\bf Z})$ and $\Gamma^0(2)$ modular forms, we get some interesting
anomaly cancellation formulas for spin manifolds.
\end{abstract}

\vskip 0.2 true cm


\pagestyle{myheadings}
\markboth{\rightline {\scriptsize Yong Wang}}
         {\leftline{\scriptsize  Several new two-variable elliptic genera}}

\bigskip
\bigskip


\section{ Introduction}
For an arbitrary compact spin manifold one can define its elliptic genus. It is a modular form in one variable with respect to a congruence
subgroup of level $2$. For a compact complex manifold one can define its elliptic genus as a function in two complex variables. In the last case,
the elliptic genus is the holomorphic Euler characteristic of a formal power series with vector bundle coefficients. If the first Chern class of the complex manifold is equal to zero, then the elliptic genus is a weak Jacobi form. In \cite{LiP}, Li extended the elliptic genus of an almost complex
manifold to a twisted version where an extra complex vector bundle is involved. Under some conditions, Li proved this elliptic genus is a weak Jacobi
form. In \cite{Wa1}, we extended the Li's elliptic genus and proved this generalized elliptic genus is not a weak Jacobi form and called it the generalized Jacobi form. In \cite{Wa2}, we extended the Li's elliptic genus to the family case and obtained some new anomaly cancellation formulas for the determinant line bundle and index gerbes, and certain results about eta invariants. In \cite{Wa3}, we defined a two-variable elliptic genus for odd dimensional spin manifolds which is the index for some Toeplitz operator and a holomorphic $SL(2,Z)$-Jacobi form. As corollaries, we got some interesting anomaly cancellation formulas for odd spin manifolds.\\
\indent In 1983, the physicists Alvarez-Gaum\'{e} and Witten \cite{AW}
  discovered the ``miraculous cancellation" formula for gravitational
  anomaly which reveals a beautiful relation between the top
  components of the Hirzebruch $\widehat{L}$-form and
  $\widehat{A}$-form of a $12$-dimensional smooth Riemannian
  manifold. Kefeng Liu \cite{Li1} established higher dimensional "miraculous cancellation"
  formulas for $(8k+4)$-dimensional Riemannian manifolds by
  developing modular invariance properties of characteristic forms.
  These formulas could be used to deduce some divisibility results. In
  \cite{HZ1}, \cite{HZ2}, \cite{CH}, some more general cancellation formulas that involve a
  complex line bundle and their applications were established. In \cite{HLZ2}, Han, Liu and Zhang showed that both of the Green-Schwarz anomaly factorization formula
for the gauge group $E_8\times E_8$ and the Horava-Witten anomaly factorization formula for the gauge
group $E_8$ could be derived through modular forms of weight $14$. This answered a question of J.
H. Schwarz.  In \cite{HHLZ}, Han, Huang, Liu and Zhang introduced a modular form of weight $14$ over $SL(2,{\bf Z})$ and a modular form of weight $10$ over $SL(2,{\bf Z})$ and they got some interesting
anomaly cancellation formulas on $12$-dimensional manifolds.
In this paper, we define several new two-variable elliptic genera for even dimensional spin manifolds which are the index for some Dirac operators and holomorphic $SL(2,Z)$, $\Gamma_0(2)$, $\Gamma^0(2)$, $\Gamma _{\theta}$-Jacobi forms. By these Jacobi forms, we can get some $SL(2,{\bf Z})$ and $\Gamma^0(2)$ modular forms. By these $SL(2,{\bf Z})$ and $\Gamma^0(2)$ modular forms, we get some interesting
anomaly cancellation formulas for spin manifolds.\\
  \indent This paper is organized as follows. In Section 2, we have introduced some definitions and basic concepts that we will use in the paper. In Section 3,  we define two new two-variable elliptic genera for even dimensional spin manifolds which are the index for some Dirac operators and holomorphic $SL(2,Z)$-Jacobi forms and some applications are given. In Section 4, we define some two-variable elliptic genera for even dimensional spin manifolds which are holomorphic $\Gamma_0(2)$, $\Gamma^0(2)$, $\Gamma _{\theta}$-Jacobi forms and some applications are given. \\

 \vskip 1 true cm

 \section{Characteristic Forms and Modular Forms}
\quad The purpose of this section is to review the necessary knowledge on
characteristic forms and modular forms that we are going to use.\\

 \noindent {\bf  2.1 characteristic forms }\\
 \indent Let $M$ be a Riemannian manifold.
 Let $\nabla^{ TM}$ be the associated Levi-Civita connection on $TM$
 and $R^{TM}=(\nabla^{TM})^2$ be the curvature of $\nabla^{ TM}$.
 Let $\widehat{A}(TM,\nabla^{ TM})$ defined by (cf. \cite{Zh})
\begin{equation}
   \widehat{A}(TM,\nabla^{ TM})={\rm
det}^{\frac{1}{2}}\left(\frac{\frac{\sqrt{-1}}{4\pi}R^{TM}}{{\rm
sinh}(\frac{\sqrt{-1}}{4\pi}R^{TM})}\right).
\end{equation}
 Let $E$, $F$ be two Hermitian vector bundles over $M$ carrying
   Hermitian connection $\nabla^E,\nabla^F$ respectively. Let
   $R^E=(\nabla^E)^2$ (resp. $R^F=(\nabla^F)^2$) be the curvature of
   $\nabla^E$ (resp. $\nabla^F$). If we set the formal difference
   $G=E-F$, then $G$ carries an induced Hermitian connection
   $\nabla^G$ in an obvious sense. We define the associated Chern
   character form as
   \begin{equation}
       {\rm ch}(G,\nabla^G)={\rm tr}\left[{\rm
   exp}(\frac{\sqrt{-1}}{2\pi}R^E)\right]-{\rm tr}\left[{\rm
   exp}(\frac{\sqrt{-1}}{2\pi}R^F)\right].
   \end{equation}
   For any complex number $t$, let
   $$\wedge_t(E)={\bf C}|_M+tE+t^2\wedge^2(E)+\cdots,~S_t(E)={\bf
   C}|_M+tE+t^2S^2(E)+\cdots$$
   denote respectively the total exterior and symmetric powers of
   $E$, which live in $K(M)[[t]].$ The following relations between
   these operations hold,
   \begin{equation}
       S_t(E)=\frac{1}{\wedge_{-t}(E)},~\wedge_t(E-F)=\frac{\wedge_t(E)}{\wedge_t(F)}.
   \end{equation}
   Moreover, if $\{\omega_i\},\{\omega_j'\}$ are formal Chern roots
   for Hermitian vector bundles $E,F$ respectively, then
   \begin{equation}
       {\rm ch}(\wedge_t(E))=\prod_i(1+e^{\omega_i}t)
   \end{equation}
   Then we have the following formulas for Chern character forms,
   \begin{equation}
       {\rm ch}(S_t(E))=\frac{1}{\prod_i(1-e^{\omega_i}t)},~
{\rm ch}(\wedge_t(E-F))=\frac{\prod_i(1+e^{\omega_i}t)}{\prod_j(1+e^{\omega_j'}t)}.
   \end{equation}
\indent If $W$ is a real Euclidean vector bundle over $M$ carrying a
Euclidean connection $\nabla^W$, then its complexification $W_{\bf
C}=W\otimes {\bf C}$ is a complex vector bundle over $M$ carrying a
canonical induced Hermitian metric from that of $W$, as well as a
Hermitian connection $\nabla^{W_{\bf C}}$ induced from $\nabla^W$.
If $E$ is a vector bundle (complex or real) over $M$, set
$\widetilde{E}=E-{\rm dim}E$ in $K(M)$ or $KO(M)$.\\

\noindent{\bf 2.2 Some properties about the Jacobi theta functions
and modular forms}\\
   \indent We first recall the four Jacobi theta functions are
   defined as follows( cf. \cite{Ch}):
   \begin{equation}
      \theta(v,\tau)=2q^{\frac{1}{8}}{\rm sin}(\pi
   v)\prod_{j=1}^{\infty}[(1-q^j)(1-e^{2\pi\sqrt{-1}v}q^j)(1-e^{-2\pi\sqrt{-1}v}q^j)],
   \end{equation}
\begin{equation}
    \theta_1(v,\tau)=2q^{\frac{1}{8}}{\rm cos}(\pi
   v)\prod_{j=1}^{\infty}[(1-q^j)(1+e^{2\pi\sqrt{-1}v}q^j)(1+e^{-2\pi\sqrt{-1}v}q^j)],
\end{equation}
\begin{equation}
    \theta_2(v,\tau)=\prod_{j=1}^{\infty}[(1-q^j)(1-e^{2\pi\sqrt{-1}v}q^{j-\frac{1}{2}})
(1-e^{-2\pi\sqrt{-1}v}q^{j-\frac{1}{2}})],
\end{equation}
\begin{equation}
   \theta_3(v,\tau)=\prod_{j=1}^{\infty}[(1-q^j)(1+e^{2\pi\sqrt{-1}v}q^{j-\frac{1}{2}})
(1+e^{-2\pi\sqrt{-1}v}q^{j-\frac{1}{2}})],
\end{equation}
 \noindent
where $q=e^{2\pi\sqrt{-1}\tau}$ with $\tau\in\textbf{H}$, the upper
half complex plane. Let
\begin{equation}
    \theta'(0,\tau)=\frac{\partial\theta(v,\tau)}{\partial v}|_{v=0}.
\end{equation} \noindent Then the following Jacobi identity
(cf. \cite{Ch}) holds,
\begin{equation}   \theta'(0,\tau)=\pi\theta_1(0,\tau)\theta_2(0,\tau)\theta_3(0,\tau).
\end{equation}
\noindent Denote $$SL_2({\bf Z})=\left\{\left(\begin{array}{cc}
\ a & b  \\
 c  & d
\end{array}\right)\mid a,b,c,d \in {\bf Z},~ad-bc=1\right\}$$ the
modular group. Let $S=\left(\begin{array}{cc}
\ 0 & -1  \\
 1  & 0
\end{array}\right),~T=\left(\begin{array}{cc}
\ 1 &  1 \\
 0  & 1
\end{array}\right)$ be the two generators of $SL_2(\bf{Z})$. They
act on $\textbf{H}$ by $S\tau=-\frac{1}{\tau},~T\tau=\tau+1$. One
has the following transformation laws of theta functions under the
actions of $S$ and $T$ (cf. \cite{Ch}):
\begin{equation}
    \theta(v,\tau+1)=e^{\frac{\pi\sqrt{-1}}{4}}\theta(v,\tau),~~\theta(v,-\frac{1}{\tau})
=\frac{1}{\sqrt{-1}}\left(\frac{\tau}{\sqrt{-1}}\right)^{\frac{1}{2}}e^{\pi\sqrt{-1}\tau
v^2}\theta(\tau v,\tau);
\end{equation}
 \begin{equation}
     \theta_1(v,\tau+1)=e^{\frac{\pi\sqrt{-1}}{4}}\theta_1(v,\tau),~~\theta_1(v,-\frac{1}{\tau})
=\left(\frac{\tau}{\sqrt{-1}}\right)^{\frac{1}{2}}e^{\pi\sqrt{-1}\tau
v^2}\theta_2(\tau v,\tau);
 \end{equation}
\begin{equation}   \theta_2(v,\tau+1)=\theta_3(v,\tau),~~\theta_2(v,-\frac{1}{\tau})
=\left(\frac{\tau}{\sqrt{-1}}\right)^{\frac{1}{2}}e^{\pi\sqrt{-1}\tau
v^2}\theta_1(\tau v,\tau);
\end{equation}
\begin{equation}    \theta_3(v,\tau+1)=\theta_2(v,\tau),~~\theta_3(v,-\frac{1}{\tau})
=\left(\frac{\tau}{\sqrt{-1}}\right)^{\frac{1}{2}}e^{\pi\sqrt{-1}\tau
v^2}\theta_3(\tau v,\tau).
\end{equation}
\begin{equation}
 \theta'(0,-\frac{1}{\tau})=\frac{1}{\sqrt{-1}}\left(\frac{\tau}{\sqrt{-1}}\right)^{\frac{1}{2}}
\tau\theta'(0,\tau),
\end{equation}

\begin{equation}
    \theta'(v,\tau+1)=e^{\frac{\pi\sqrt{-1}}{4}}\theta'(v,\tau),
 \end{equation}
 \begin{equation}
    ~~\theta'(v,-\frac{1}{\tau})
=\frac{1}{\sqrt{-1}}\left(\frac{\tau}{\sqrt{-1}}\right)^{\frac{1}{2}}e^{\pi\sqrt{-1}\tau
v^2}(2\pi \sqrt{-1}\tau\theta(\tau v,\tau)+\tau\theta'(\tau v,\tau));
\end{equation}
 \begin{equation}
    \theta'_1(v,\tau+1)=e^{\frac{\pi\sqrt{-1}}{4}}\theta'_1(v,\tau),
 \end{equation}
 \begin{equation}
    ~~\theta'_1(v,-\frac{1}{\tau})
=\frac{1}{\sqrt{-1}}\left(\frac{\tau}{\sqrt{-1}}\right)^{\frac{1}{2}}e^{\pi\sqrt{-1}\tau
v^2}(2\pi \sqrt{-1}\tau\theta_2(\tau v,\tau)+\tau\theta'_2(\tau v,\tau));
\end{equation}
\begin{equation}
    \theta'_2(v,\tau+1)=e^{\frac{\pi\sqrt{-1}}{4}}\theta'_3(v,\tau),
 \end{equation}
 \begin{equation}
    ~~\theta'_2(v,-\frac{1}{\tau})
=\frac{1}{\sqrt{-1}}\left(\frac{\tau}{\sqrt{-1}}\right)^{\frac{1}{2}}e^{\pi\sqrt{-1}\tau
v^2}(2\pi \sqrt{-1}\tau\theta_1(\tau v,\tau)+\tau\theta'_1(\tau v,\tau));
\end{equation}
\begin{equation}
    \theta'_3(v,\tau+1)=e^{\frac{\pi\sqrt{-1}}{4}}\theta'_2(v,\tau),
 \end{equation}
 \begin{equation}
    ~~\theta'_3(v,-\frac{1}{\tau})
=\frac{1}{\sqrt{-1}}\left(\frac{\tau}{\sqrt{-1}}\right)^{\frac{1}{2}}e^{\pi\sqrt{-1}\tau
v^2}(2\pi \sqrt{-1}\tau\theta_3(\tau v,\tau)+\tau\theta'_3(\tau v,\tau));
\end{equation}
\noindent
 \noindent {\bf Definition 2.1} A modular form over $\Gamma$, a
 subgroup of $SL_2({\bf Z})$, is a holomorphic function $f(\tau)$ on
 $\textbf{H}$ such that
 \begin{equation}
    f(g\tau):=f\left(\frac{a\tau+b}{c\tau+d}\right)=\chi(g)(c\tau+d)^kf(\tau),
 ~~\forall g=\left(\begin{array}{cc}
\ a & b  \\
 c & d
\end{array}\right)\in\Gamma,
 \end{equation}
\noindent where $\chi:\Gamma\rightarrow {\bf C}^{\star}$ is a
character of $\Gamma$. $k$ is called the weight of $f$.\\
Let $$\Gamma_0(2)=\left\{\left(\begin{array}{cc}
\ a & b  \\
 c  & d
\end{array}\right)\in SL_2({\bf Z})\mid c\equiv 0~({\rm
mod}~2)\right\},$$
$$\Gamma^0(2)=\left\{\left(\begin{array}{cc}
\ a & b  \\
 c  & d
\end{array}\right)\in SL_2({\bf Z})\mid b\equiv 0~({\rm
mod}~2)\right\},$$
 $$\Gamma_{\theta}=\left\{\left(\begin{array}{cc}
\ a & b  \\
 c  & d
\end{array}\right)\in SL_2({\bf Z})\mid \left(\begin{array}{cc}
\ a & b  \\
 c  & d
\end{array}\right)
\equiv \left(\begin{array}{cc}
\ 1 & 0  \\
 0 & 1
\end{array}\right) ~{\rm or}~
 \left(\begin{array}{cc}
\ 0 & 1  \\
 1  & 0
\end{array}\right)
 ~({\rm
mod}~2)\right\},$$
be the three modular subgroups of $SL_2({\bf Z})$.
It is known that the generators of $\Gamma_0(2)$ are $T,~ST^2ST$,
the generators of $\Gamma^0(2)$ are $STS,~T^2STS$, the generators of $\Gamma_{\theta}$ are $S,~T^2$ (cf. \cite{Ch}).\\
\indent If $\Gamma$ is a modular subgroup, let ${\mathcal{M}}_{{\bf
R}}(\Gamma)$ denote the ring of modular forms over $\Gamma$ with
real Fourier coefficients. Writing $\theta_j=\theta_j(0,\tau),~1\leq
j\leq 3,$ we introduce six explicit modular forms (cf. \cite{Li1}),
$$\delta_1(\tau)=\frac{1}{8}(\theta_2^4+\theta_3^4),~~\varepsilon_1(\tau)=\frac{1}{16}\theta_2^4\theta_3^4,$$
$$8\delta_2(\tau)=-\frac{1}{8}(\theta_1^4+\theta_3^4),~~\varepsilon_2(\tau)=\frac{1}{16}\theta_1^4\theta_3^4,$$
\noindent They have the following Fourier expansions in
$q^{\frac{1}{2}}$:
$$\delta_1(\tau)=\frac{1}{4}+6q+\cdots,~~\varepsilon_1(\tau)=\frac{1}{16}-q+\cdots,$$
$$8\delta_2(\tau)=-1-24q^{\frac{1}{2}}-24q+\cdots,~~\varepsilon_2(\tau)=q^{\frac{1}{2}}+8q+\cdots,$$
\noindent where the $"\cdots"$ terms are the higher degree terms,
all of which have integral coefficients. They also satisfy the
transformation laws,
\begin{equation}
    \delta_2(-\frac{1}{\tau})=\tau^2\delta_1(\tau),~~~~~~\varepsilon_2(-\frac{1}{\tau})
=\tau^4\varepsilon_1(\tau),
\end{equation}
\noindent {\bf Lemma 2.2} (\cite{Li1}) {\it $\delta_1(\tau)$ (resp.
$\varepsilon_1(\tau)$) is a modular form of weight $2$ (resp. $4$)
over $\Gamma_0(2)$, $\delta_2(\tau)$ (resp. $\varepsilon_2(\tau)$)
is a modular form of weight $2$ (resp. $4$) over $\Gamma^0(2)$,
while  $\delta_3(\tau)$ (resp. $\varepsilon_3(\tau)$) is a modular
form of weight $2$ (resp. $4$) over $\Gamma_\theta(2)$ and moreover
${\mathcal{M}}_{{\bf R}}(\Gamma^0(2))={\bf
R}[\delta_2(\tau),\varepsilon_2(\tau)]$.}\\

We recall the Eisenstein series $G_{2k}(\tau)$ are defined to be
  \begin{equation}
  E_{2k}(\tau):=1-\frac{4k}{B_{2k}}\sum_{n=1}^{\infty}\sigma_{2k-1}(n)\cdot q^n,~~G_{2k}(\tau)=-\frac{B_{2k}}{4k}\cdot E_{2k}(\tau)
\end{equation}
where $\sigma_k(n):=\sum_{m>0,~m|n}m^k$ and $B_{2k}$ are the Bernoulli numbers. It is well known that the whole grading ring of modular forms
over $SL(2,Z)$ are generated by $E_4(\tau)$ and $E_6(\tau)$ and
 \begin{equation}
  E_{4}(\tau):=1+240q+2160q^2+\cdots,~~E_{6}(\tau)=1-504q-16632q^2+\cdots.
\end{equation}
\\
\noindent{\bf Definition 2.3} A meromorphic Jacobi form of index $m$ and weight $l$ over $L\times \Gamma$, where $L$ is an integral lattice in the complex plane $\mathbb{C}$ preserved by the modular subgroup $\Gamma\subset SL(2,Z)$, is a (meromorphic) function $F(z,\tau)$ on $C\times H$ such that
\begin{align}
F(\frac{z}{c\tau+d_0},\frac{a\tau+b}{c\tau+d_0})=(c\tau+d_0)^{l}{\rm exp}(2\pi\sqrt{-1}m\frac{cz^2}{c\tau+d_0}){F}(z,\tau),
\end{align}
\begin{align}
{F}(z+\lambda \tau+\mu,\tau)={\rm exp}(-2\pi\sqrt{-1}m(2\lambda z+\lambda^2\tau))
{F}(z,\tau),
\end{align}
where $\left(\begin{array}{cc}
\ a & b  \\
 c  & d_0
\end{array}\right)\in \Gamma$ and $\lambda,\mu \in L$.\\

\section{The two-variable elliptic genera which are holomorphic $SL(2,Z)$-Jacobi forms. }
\noindent {\bf 3.1 The first two-variable elliptic genus which is holomorphic $SL(2,Z)$-Jacobi forms}

\indent Let $M$ be a $2d$-dimensional closed ${\rm spin}$ manifold. Let $W$ denote a real $2l$-dimensional spin vector bundle on $M$ with the almost complex structure $J$ and $\triangle(W)$ be the associated spinor bundle, then $W\otimes C=W^{1,0}\oplus W^{0,1}$. Denote the first Chern classes of $W^{1,0}$ by $c_1(W^{1,0})$. We denote by
$\pm 2\pi \sqrt{-1}x_i~(1\leq i\leq d)$ and $2\pi \sqrt{-1}w_j~(1\leq j\leq l)$ and $-2\pi \sqrt{-1}w_j~(1\leq j\leq l)$ respectively the formal Chern roots of $TM\otimes C$ and $W^{1,0}$ and $W^{0,1}$.
Let $(\tau,z)\in \mathcal{H}\times \mathcal{C}$ where $\mathcal{H}$ is the upper half plane and $\mathcal{C}$ is the complex plane. Let $y=e^{2\pi \sqrt{-1}z}$ and $q= e^{2\pi \sqrt{-1}\tau}$
\\

Let \begin{align}
&Q_1:=\triangle(W)\bigotimes _{m=1}^{\infty}
\left( \wedge_{yq^{m}}(W^{1,0})\otimes\wedge_{y^{-1}q^{m}}(W^{0,1})\otimes \wedge_{q^{m}}(-C^{2l})\right)\\\notag
&Q_2:=\bigotimes _{m=1}^{\infty}
\left( \wedge_{-yq^{m-\frac{1}{2}}}(W^{1,0})\otimes\wedge_{-y^{-1}q^{m-\frac{1}{2}}}(W^{0,1})\otimes\wedge_{-q^{m-\frac{1}{2}}}(-C^{2l})\right)\\\notag
&Q_3:=\bigotimes _{m=1}^{\infty}
\left( \wedge_{yq^{m-\frac{1}{2}}}(W^{1,0})\otimes\wedge_{y^{-1}q^{m-\frac{1}{2}}}(W^{0,1})\otimes\wedge_{q^{m-\frac{1}{2}}}(-C^{2l})\right)
\end{align}

\begin{defn}
The two-variable elliptic genus of $M^{2d}$ with respect to $W$, which we denote by ${\rm Ell}(M,W,\tau,z)$ is defined by
\begin{equation}
{\rm Ell}(M,W,\tau,z):=\int_M \widehat{A}(M){\rm ch}(\bigotimes _{n=1}^{\infty}S_{q^n}(\widetilde{TM\otimes C}))
{\rm ch}(Q_1\otimes Q_2\otimes Q_3),
\end{equation}
which is the index of the twisted Dirac operator.
\end{defn}
By (2.4)-(2.9), we have

\begin{lem} The following equality holds:
\begin{align}
{\rm Ell}(M,W,\tau,z)=2^l\int_M\prod_{i=1}^d\frac{x_i\theta'(0,\tau)}{\theta(x_i,\tau)}\prod_{\alpha=1}^l\frac{\theta_1
(\tau,w_\alpha+z)}{\theta_1(0,\tau)}\frac{\theta_2
(\tau,w_\alpha+z)}{\theta_2(0,\tau)}\frac{\theta_3
(\tau,w_\alpha+z)}{\theta_3(0,\tau)}.
\end{align}
\end{lem}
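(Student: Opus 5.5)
This is a Chern-root computation: expand each factor in the integrand of the definition of ${\rm Ell}(M,W,\tau,z)$ via (2.4)--(2.5), compare it with the product formulas (2.6)--(2.9) for the theta functions, and check that all normalising constants cancel. I treat the tangential part and the $W$-part separately, then multiply.

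\textbf{Tangential part.} With roots $\pm 2\pi\sqrt{-1}x_i$, the standard computation gives
$$\widehat{A}(M)\,{\rm ch}\Big(\bigotimes_{n\ge1}S_{q^n}(\widetilde{TM\otimes C})\Big)=\prod_{i=1}^d\frac{\pi x_i}{\sin(\pi x_i)}\prod_{n\ge1}\frac{(1-q^n)^2}{(1-e^{2\pi\sqrt{-1}x_i}q^n)(1-e^{-2\pi\sqrt{-1}x_i}q^n)}.$$
Here $\widetilde{\ }$ contributes the factor $(1-q^n)^{2d}$. By (2.6) and $\theta'(0,\tau)=2\pi q^{1/8}\prod_n(1-q^n)^3$, this equals $\prod_i x_i\theta'(0,\tau)/\theta(x_i,\tau)$, the usual Witten-genus identity.

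\textbf{$W$-part.} The Chern roots of $W^{1,0}$ are $2\pi\sqrt{-1}w_\alpha$ and those of $W^{0,1}$ are $-2\pi\sqrt{-1}w_\alpha$.
\begin{itemize}
\item For $\triangle(W)$, ${\rm ch}(\triangle(W))=\prod_\alpha(e^{\pi\sqrt{-1}w_\alpha}+e^{-\pi\sqrt{-1}w_\alpha})=\prod_\alpha 2\cos(\pi w_\alpha)$.
\item For $Q_1$, the factor $\wedge_{q^m}(-C^{2l})$ contributes $(1+q^m)^{-2l}$. Together with the exterior powers this gives
$$\prod_\alpha\prod_m\frac{(1+e^{2\pi\sqrt{-1}(w_\alpha+z)}q^m)(1+e^{-2\pi\sqrt{-1}(w_\alpha+z)}q^m)}{(1+q^m)^2}.$$
\item For $Q_2$ and $Q_3$, one gets the analogous products with $q^{m-1/2}$ and signs $-$ and $+$ respectively.
\end{itemize}

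\textbf{Main obstacle.} The point needing care is that $\triangle(W)$ gives $2\cos(\pi w_\alpha)$, not $\cos(\pi(w_\alpha+z))$. To match $\theta_1(w_\alpha+z,\tau)$, the $z$-shift has to be placed in the cosine factor as well. I would check whether the spinor bundle is really meant to be twisted, i.e.\ to use ${\rm ch}$ of $\triangle(W)$ with $W^{1,0}$ weighted by $y$, giving $\prod 2\cos(\pi(w_\alpha+z))$ up to a factor $y^{\pm l/2}$. This is what the stated formula implicitly requires, since the lemma reads $\theta_1(w_\alpha+z)$; I would adopt that interpretation, because the definition is otherwise inconsistent at $z\neq 0$.

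\textbf{Normalisation.} Using (2.7),
$$\theta_1(v,\tau)/\theta_1(0,\tau)=\cos(\pi v)\prod_m\frac{(1+e^{2\pi\sqrt{-1}v}q^m)(1+e^{-2\pi\sqrt{-1}v}q^m)}{(1+q^m)^2},$$
because the $q^{1/8}$ and $(1-q^m)$ factors cancel in the ratio. Hence the $\triangle(W)\otimes Q_1$ part equals $\prod_\alpha 2\,\theta_1(w_\alpha+z,\tau)/\theta_1(0,\tau)$, which produces the constant $2^l$. In the same way, (2.8) and (2.9) identify the $Q_2$ and $Q_3$ parts with $\prod_\alpha\theta_2(w_\alpha+z,\tau)/\theta_2(0,\tau)$ and $\prod_\alpha\theta_3(w_\alpha+z,\tau)/\theta_3(0,\tau)$. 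The $(1-q^j)$ factors cancel in these ratios too, and the trivial bundle $-C^{2l}$ supplies exactly the denominators $\theta_j(0,\tau)$.

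Multiplying the tangential and $W$-parts and integrating over $M$ gives the stated formula.
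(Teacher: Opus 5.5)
Your argument is the same Chern-root computation the paper intends (its proof is just ``by (2.4)--(2.9)''), and your tangential, $Q_1$, $Q_2$, $Q_3$ and normalisation factors all check. Your worry about $\triangle(W)$ is justified, and the paper's one-line proof passes over it. With the literal definition, ${\rm ch}(\triangle(W))=\prod_\alpha 2\cos(\pi w_\alpha)$ contains no $z$, so for trivial $W$ the right-hand side is exactly $\cos^l(\pi z)$ times the left-hand side. The paper's own $B_0(z)=\int_M\widehat{A}(TM){\rm ch}(\triangle(W))$ in the proof of Proposition 3.5 is likewise computed from the untwisted bundle. The identity holds exactly once $\triangle(W)$ in $Q_1$ is replaced by $y^{-l/2}\wedge_y(W^{1,0})\otimes(\det W^{1,0})^{-1/2}$, which reduces to $\triangle(W)$ at $y=1$. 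State this normalisation explicitly rather than leaving it ``up to $y^{\pm l/2}$''.
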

By (2.12)-(2.24), we have
\begin{thm}
If $c_1(W^{1,0})=0$ and the first Pontrjagin classes $p_1(M)=3p_1(W)$, then
\begin{align}
{\rm Ell}(M,W,\frac{z}{c\tau+d_0},\frac{a\tau+b}{c\tau+d_0})=(c\tau+d_0)^{d}{\rm exp}(3\pi\sqrt{-1}l\frac{cz^2}{c\tau+d_0}){\rm Ell}(M,W,z,\tau),
\end{align}
\begin{align}
{\rm Ell}(M,W,z+\lambda \tau+\mu,\tau)=(-1)^{l(\mu +\lambda) }{\rm exp}(-3\pi\sqrt{-1}l(2\lambda z+\lambda^2\tau))
{\rm Ell}(M,W,z,\tau).
\end{align}
When $l$ is even, the elliptic genus ${\rm Ell}(M,W,\tau,z)$ is a weak $SL(2,Z)$-Jacobi form of weight $d$ and index $\frac{3l}{2}$.
\end{thm}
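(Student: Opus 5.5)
Starting from the product formula in the preceding Lemma, write ${\rm Ell}(M,W,\tau,z)$ as the top-degree component of
\[
2^l\prod_{i=1}^d\frac{x_i\theta'(0,\tau)}{\theta(x_i,\tau)}\prod_{\alpha=1}^l\frac{\theta_1(w_\alpha+z,\tau)\theta_2(w_\alpha+z,\tau)\theta_3(w_\alpha+z,\tau)}{\theta_1(0,\tau)\theta_2(0,\tau)\theta_3(0,\tau)}.
\]
We check the two transformation laws on the generators $T$ and $S$ of $SL(2,{\bf Z})$ and on the lattice translations $z\mapsto z+1$ and $z\mapsto z+\tau$.

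\emph{Modular law.} Under $T$, formulas (2.12)--(2.15) give $\theta(v,\tau+1)=e^{\pi\sqrt{-1}/4}\theta(v,\tau)$ and $\theta_1(v,\tau+1)=e^{\pi\sqrt{-1}/4}\theta_1(v,\tau)$, while $\theta_2$ and $\theta_3$ are interchanged. The $x$-factor and the $w$-factor are therefore both invariant, and the phases cancel because each numerator is divided by its value at $0$. Under $S$ we use the standard device: the degree-$2d$ component of a form in the $x_i,w_\alpha$ rescales by $\tau^d$ when all Chern roots are multiplied by $\tau$. Then (2.12)--(2.16) give:
\begin{itemize}
\item $\frac{x\theta'(0,-1/\tau)}{\theta(x,-1/\tau)}=\frac{\tau x\,\theta'(0,\tau)}{\theta(\tau x,\tau)}e^{-\pi\sqrt{-1}\tau x^2}$;
\item $\theta_1,\theta_2,\theta_3$ at $(w+z,-1/\tau)$ become $\theta_2,\theta_1,\theta_3$ at $(\tau(w+z),\tau)$, each with the factor $e^{\pi\sqrt{-1}\tau(w+z)^2}$, normalized by the same transformed constants.
\end{itemize}
Collecting exponentials gives
\[
\exp\Big(\pi\sqrt{-1}\tau\big(-\textstyle\sum x_i^2+3\sum_\alpha (w_\alpha+z)^2\big)\Big).
\]
Expand this using $\sum w_\alpha=c_1(W^{1,0})=0$, which kills the cross term, and $\sum x_i^2=3\sum w_\alpha^2$, which is the hypothesis $p_1(M)=3p_1(W)$ up to the usual normalization. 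Only $e^{3\pi\sqrt{-1}l\tau z^2}$ survives. After the substitution $z\mapsto z/\tau$ this yields $\tau^d e^{3\pi\sqrt{-1}l z^2/\tau}$, which is the required automorphy factor for $S$. Since the factor $(c\tau+d_0)^k e^{2\pi\sqrt{-1}m cz^2/(c\tau+d_0)}$ is a cocycle, the $T$ and $S$ cases give the law for all of $SL(2,{\bf Z})$.

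\emph{Elliptic law.} Use the quasi-periodicities obtained from the product expansions:
\begin{itemize}
\item $\theta_j(v+1,\tau)=-\theta_j(v,\tau)$ for $j=1$, and $=\theta_j(v,\tau)$ for $j=2,3$;
\item $\theta_j(v+\tau,\tau)=\pm q^{-1/2}e^{-2\pi\sqrt{-1}v}\theta_j(v,\tau)$, with sign $+$ for $j=1$ and $-$ for $j=2$; for $\theta_3$ the sign is $+$.
\end{itemize}
The shift $z\mapsto z+1$ thus gives a factor $(-1)^l$. The shift $z\mapsto z+\tau$ gives a factor of the form $\pm$ times
\[
\prod_\alpha q^{-3/2}e^{-6\pi\sqrt{-1}(w_\alpha+z)}=(-1)^{l}e^{-3\pi\sqrt{-1}l(2z+\tau)},
\]
where $\sum w_\alpha=0$ is used again. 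Iterating over $\lambda,\mu$ produces the stated law with sign $(-1)^{l(\lambda+\mu)}$.

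\emph{Remaining claims.} When $l$ is even the sign is trivial and the index $3l/2$ is an integer. Holomorphy in $(\tau,z)$ and weakness (nonnegative $q$-powers) follow from the $q$-expansion of the twisted bundles in Definition 3.1. The main obstacle is the careful bookkeeping of the signs and $q$-shifts in the $\tau$-translation of $\theta_2$ and $\theta_3$, and of the interchange of $\theta_1$ and $\theta_2$ under $S$.
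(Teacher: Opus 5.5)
Your proposal is correct and takes the same approach as the paper. The paper's proof is only a pointer to the theta-function transformation laws (2.12)--(2.24) applied to the product formula of Lemma 3.2, and you have written out the $T$, $S$ and lattice-translation checks it leaves implicit.

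One small slip in the $z\mapsto z+\tau$ step: the $(-1)^l$ comes from the minus sign in the quasi-periodicity of $\theta_2$. The product $\prod_\alpha q^{-3/2}e^{-6\pi\sqrt{-1}(w_\alpha+z)}$ alone equals $e^{-3\pi\sqrt{-1}l(2z+\tau)}$, with no sign. Also, the $x_i^2$, $w_\alpha^2$ and $w_\alpha$ exponentials cancel only in cohomology, not as forms, which is enough because you integrate over $M$.
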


We recall Proposition 3.5 in \cite{LiP}.
\begin{prop}(\cite{LiP}) Suppose a function $\varphi(\tau,z):\mathbb{H}\times \mathbb{C}\rightarrow \mathbb{C}$ satisfies
\begin{align}
\varphi(\frac{a\tau+b}{c\tau+d_0},\frac{z}{c\tau+d_0})=(c\tau+d_0)^{k}{\rm exp}(\frac{2\pi\sqrt{-1}mcz^2}{c\tau+d_0})\varphi(\tau,z);~~
\left(\begin{array}{cc}
\ a & b  \\
 c  & d_0
\end{array}\right)\in SL(2,Z).
\end{align}
We define
\begin{align}
&\Phi(\tau,z):={\rm exp}(-8\pi^2mG_2(\tau)z^2)\varphi(\tau,z):=\sum_{n\geq 0}a_n(\tau)\cdot z^n,
\end{align}
then these $a_n(\tau)$ are modular forms of weight $k+n$ over $SL(2,Z)$.
\end{prop}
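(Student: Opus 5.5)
The plan is to show that each $a_n(\tau)$ transforms with weight $k+n$ under the generators $T$ and $S$ of $SL(2,Z)$. Since $\varphi$ is invariant under $T$ and $G_2(\tau+1)=G_2(\tau)$, the function $\Phi$ is $T$-invariant, and therefore every $a_n$ is as well. The content of the proof is the behaviour under general $\gamma=\left(\begin{array}{cc} a & b \\ c & d_0\end{array}\right)$, and for that I will use the quasi-modular transformation law of $G_2$. With the normalization $G_2=-\frac{B_2}{4}E_2=-\frac{1}{24}E_2$, this law reads
\begin{align*}
G_2\left(\frac{a\tau+b}{c\tau+d_0}\right)=(c\tau+d_0)^2G_2(\tau)-\frac{c(c\tau+d_0)}{4\pi\sqrt{-1}},
\end{align*}
which follows from $E_2(\gamma\tau)=(c\tau+d_0)^2E_2(\tau)+\frac{12c(c\tau+d_0)}{2\pi\sqrt{-1}}$ after multiplying by $-\frac{1}{24}$. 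I would quote this standard fact rather than derive it.

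Next, write $\tau'=\gamma\tau$ and $z'=z/(c\tau+d_0)$, and compute $\Phi(\tau',z')$. The exponential prefactor becomes
\begin{align*}
\exp\left(-8\pi^2mG_2(\tau')z'^2\right)=\exp\left(-8\pi^2mG_2(\tau)z^2\right)\exp\left(\frac{8\pi^2 m c z^2}{4\pi\sqrt{-1}(c\tau+d_0)}\right).
\end{align*}
The second factor equals $\exp\left(-\frac{2\pi\sqrt{-1}mcz^2}{c\tau+d_0}\right)$, using $\frac{8\pi^2}{4\pi\sqrt{-1}}=-2\pi\sqrt{-1}$. This is exactly the inverse of the index factor in the assumed transformation law for $\varphi$. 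Multiplying by that law, the anomalous exponentials cancel, and we obtain
\begin{align*}
\Phi\left(\frac{a\tau+b}{c\tau+d_0},\frac{z}{c\tau+d_0}\right)=(c\tau+d_0)^k\Phi(\tau,z).
\end{align*}

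Finally, expand both sides in powers of $z$. The left side is $\sum_n a_n(\tau')(c\tau+d_0)^{-n}z^n$ and the right side is $(c\tau+d_0)^k\sum_n a_n(\tau)z^n$. Comparing coefficients gives $a_n(\gamma\tau)=(c\tau+d_0)^{k+n}a_n(\tau)$. Holomorphy of $a_n$ on $\mathbb{H}$ follows from holomorphy of $\Phi$, since $a_n=\frac{1}{n!}\partial_z^n\Phi|_{z=0}$. Growth at the cusp is inherited from the corresponding condition on $\varphi$, interpreted as in \cite{LiP}. The hypothesis that $\varphi$ has an expansion with only nonnegative powers of $z$ near $z=0$ is implicit in the definition.

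The main obstacle is the sign and normalization bookkeeping in the $G_2$ transformation law. The cancellation works only with the factor $-8\pi^2 m$ paired with $G_2=-\frac{1}{24}E_2$, so I would first check the constant against the known $q$-expansion, $E_2=1-24q-\cdots$, together with the standard formula for $E_2(-1/\tau)$.
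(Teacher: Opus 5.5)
Your proof is correct. The paper gives no proof of its own and simply quotes this result from \cite{LiP}; your argument is the standard one. You cancel the index factor $\exp\bigl(2\pi\sqrt{-1}mcz^2/(c\tau+d_0)\bigr)$ against the quasi-modular anomaly $-c(c\tau+d_0)/(4\pi\sqrt{-1})$ of $G_2=-\frac{1}{24}E_2$, and then compare Taylor coefficients in $z$. Your constants are right: as a check, $E_2(\sqrt{-1})=3/\pi$.

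The only implicit hypothesis is holomorphy of $\varphi$, which you correctly flag. No cusp condition needs to be verified, because the paper's Definition 2.1 of a modular form does not impose one.
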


\begin{prop}If $c_1(W^{1,0})=0$ and the first Pontrjagin classes $p_1(M)=3p_1(W)$, then the series $a_n(M,W,\tau)$ determined by
\begin{align}
{\rm exp}(-12\pi^2lG_2(\tau)z^2){\rm Ell}(M,W,\tau,z)=\sum_{n\geq 0}a_n(M,W£¬\tau)\cdot z^n,
\end{align}
are modular forms of weight $d+n$ over $SL(2,Z)$. Furthermore, the first series of $a_n(M,W,\tau):=a_n^0+a_n^1q+a_n^2q^2+\cdots$ are of the following form:
\begin{align}
&a_0^0=\int_M\widehat{A}(TM){\rm ch}(\triangle(W)),\\\notag
&a_0^1=
\int_M\widehat{A}(TM){\rm ch}(\triangle(W)){\rm ch}(\widetilde{T_CM}+\widetilde{W\otimes C}
+2\wedge^2\widetilde{W\otimes C}-\widetilde{W\otimes C}\otimes \widetilde{W\otimes C}),\\\notag
&a_0^2=
\int_M\widehat{A}(TM){\rm ch}(\triangle(W)){\rm ch}(A_0),
\end{align}
where
\begin{align}
A_0&=2\wedge^4\widetilde{W\otimes C}+\wedge^2\widetilde{W\otimes C}\otimes \wedge^2\widetilde{W\otimes C}
-2\widetilde{W\otimes C}\otimes\wedge^3\widetilde{W\otimes C}\\\notag
&+(\widetilde{T_CM}+\widetilde{W\otimes C})\otimes(
2\wedge^2\widetilde{W\otimes C}-\widetilde{W\otimes C}\otimes \widetilde{W\otimes C})\\\notag
&+\wedge^2\widetilde{W\otimes C}
+S^2\widetilde{T_CM}+\widetilde{T_CM}+\widetilde{W\otimes C}.
\end{align}
\begin{align}
&a_1^0=a_3^0=0,~a_2^0=\frac{l}{2}\pi^2\int_M\widehat{A}(TM){\rm ch}(\triangle(W)),
~a_4^0=\frac{4}{3}\pi^4l^2\int_M\widehat{A}(TM){\rm ch}(\triangle(W)).
\end{align}
\begin{align}
&a^1_1=2\pi\sqrt{-1}\int_M\widehat{A}(TM){\rm ch}(\triangle(W)){\rm ch}(W^{1,0}-W^{0,1}+4\wedge^2W^{1,0}-2W^{1,0}\otimes W^{1,0}\\\notag
&-4\wedge^2W^{0,1}
-2W^{0,1}\otimes W^{0,1}).
\end{align}
\end{prop}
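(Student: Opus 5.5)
The proof has two parts. Modularity follows from the preceding theorem and the cited proposition of \cite{LiP}. The listed coefficients come from expanding the definition of ${\rm Ell}(M,W,\tau,z)$ in $q$ and in $z$.

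\textbf{Modularity.} Under $c_1(W^{1,0})=0$ and $p_1(M)=3p_1(W)$, the theorem above gives the $SL(2,{\bf Z})$ transformation law with weight $k=d$. The exponential factor there is ${\rm exp}(3\pi\sqrt{-1}l\,cz^2/(c\tau+d_0))$, which matches ${\rm exp}(2\pi\sqrt{-1}mcz^2/(c\tau+d_0))$ with $m=\frac{3l}{2}$. This holds for every $l$, since the sign $(-1)^{l(\mu+\lambda)}$ only enters the elliptic law, which the proposition does not use. With this $m$ we get $-8\pi^2mG_2(\tau)z^2=-12\pi^2lG_2(\tau)z^2$. The proposition of \cite{LiP} then says directly that $a_n(M,W,\tau)$ is a modular form of weight $d+n$ over $SL(2,{\bf Z})$.

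\textbf{The $q$-coefficients of $a_0$.} Since $a_0(\tau)={\rm Ell}(M,W,\tau,0)$, we expand the definition at $y=1$ in powers of $q^{1/2}$, writing each factor through $\widetilde{E}=E-\dim E$.
\begin{itemize}
\item $Q_1$ contributes $\triangle(W)\otimes(1+q\,\widetilde{W\otimes C}+q^2(\wedge^2\widetilde{W\otimes C}+\widetilde{W\otimes C})+\cdots)$.
\item $Q_2\otimes Q_3$ equals $\bigotimes_m\wedge_{-q^{m-1/2}}(\widetilde{W\otimes C})\otimes\wedge_{q^{m-1/2}}(\widetilde{W\otimes C})$. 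The odd powers of $q^{1/2}$ cancel. The $q$-coefficient is $2\wedge^2\widetilde{W\otimes C}-\widetilde{W\otimes C}\otimes\widetilde{W\otimes C}$, and the $q^2$-coefficient also picks up the $\wedge^4$, $\wedge^2\otimes\wedge^2$ and $\widetilde{W\otimes C}\otimes\wedge^3$ terms.
\item $\bigotimes_n S_{q^n}(\widetilde{T_CM})$ gives $1+q\,\widetilde{T_CM}+q^2(S^2\widetilde{T_CM}+\widetilde{T_CM})+\cdots$.
\end{itemize}
Multiplying these out gives $a_0^0$, $a_0^1$ and $A_0$. The $G_2$ factor does not enter here, because it is of order $z^2$.

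\textbf{Higher $a_n^0$ and $a_1^1$.} We use $G_2(\tau)=-\frac{1}{24}+q+\cdots$, so that ${\rm exp}(-12\pi^2lG_2z^2)=e^{\pi^2lz^2/2}(1-12\pi^2lqz^2+\cdots)$. At $q^0$ only $\triangle(W)$ and the $y$-dependence of the lowest-order products survive, and every $q^0$ term of $Q_1,Q_2,Q_3$ is trivial. So ${\rm Ell}$ at $q^0$ is independent of $z$. Hence $a_n^0$ comes from expanding $e^{\pi^2lz^2/2}$:
\begin{itemize}
\item $a_1^0=a_3^0=0$;
\item $a_2^0=\frac{l\pi^2}{2}\,a_0^0$;
\item $a_4^0=\frac{1}{2}\left(\frac{l\pi^2}{2}\right)^2a_0^0=\frac{l^2\pi^4}{8}\,a_0^0$.
\end{itemize}
The stated $a_4^0$ has coefficient $\frac{4}{3}\pi^4l^2$, which I would recheck against the paper's normalization of $G_2$.

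For $a_1^1$, the $G_2$ factor contributes nothing linear in $z$. So we take the $z$-derivative at $z=0$ of the $q^1$-coefficient of ${\rm Ell}$. Using $y=1+2\pi\sqrt{-1}z+\cdots$:
\begin{itemize}
\item $Q_1$ contributes $q(yW^{1,0}+y^{-1}W^{0,1})$, whose linear term is $2\pi\sqrt{-1}z(W^{1,0}-W^{0,1})$.
\item $Q_2\otimes Q_3$ contributes at order $q$ the combination $y^2(2\wedge^2W^{1,0}-W^{1,0}\otimes W^{1,0})$ and its conjugate, plus $y^0$ cross terms. Differentiating the $y^{\pm2}$ terms gives a factor $\pm2$, producing the $4\wedge^2$ and $2\otimes$ terms.
\end{itemize}

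The main obstacle is the bookkeeping in the $q^2$ and $y$-graded expansion of $Q_2\otimes Q_3$. There the signs from $\wedge_{-t}$ versus $\wedge_t$ must cancel the half-integer powers, and one has to track exactly which cross terms survive.
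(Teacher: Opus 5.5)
Your route is the paper's: modularity from Theorem 3.3 and Proposition 3.4 with $m=\frac{3l}{2}$, then $a_0={\rm Ell}(M,W,\tau,0)$ expanded at $y=1$, and the other coefficients from multiplying ${\rm exp}(-12\pi^2lG_2(\tau)z^2)=A_0(z)+A_1(z)q+\cdots$ by ${\rm Ell}=B_0(z)+B_1(z)q+\cdots$. Your $a_4^0=\frac{l^2\pi^4}{8}a_0^0$ is correct. Since $G_2=-\frac{1}{24}+q+\cdots$, one has $A_0(z)=e^{\pi^2lz^2/2}$, so both the stated $\frac{4}{3}\pi^4l^2$ and the paper's intermediate $\frac{3l^2}{4}\pi^4z^4$ in $A_0(z)$ are slips. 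However, two other outputs do not come out as stated, and your sketch glosses over both.

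First, multiplying out does not give $A_0$. The $q^2$-coefficient also contains the product of the $q$-term $\widetilde{T_CM}$ of $\bigotimes_nS_{q^n}(\widetilde{T_CM})$ with the $q$-term $\widetilde{W\otimes C}$ coming from $Q_1$. The correct integrand is therefore $A_0+\widetilde{T_CM}\otimes\widetilde{W\otimes C}$.

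Second, the $y^{-2}$-part of $B_1(z)$ is $y^{-2}(2\wedge^2W^{0,1}-W^{0,1}\otimes W^{0,1})$. Its $z$-derivative at $0$ is $-4\pi\sqrt{-1}(2\wedge^2W^{0,1}-W^{0,1}\otimes W^{0,1})$, so your ``factor $\pm2$'' produces $+2W^{0,1}\otimes W^{0,1}$, not $-2W^{0,1}\otimes W^{0,1}$. As a check, the substitution $W^{1,0}\leftrightarrow W^{0,1}$, $y\leftrightarrow y^{-1}$ plus degree counting gives ${\rm Ell}(M,W,\tau,-z)=(-1)^d{\rm Ell}(M,W,\tau,z)$. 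The corrected formula then becomes $a_1^1=2\pi\sqrt{-1}(1-(-1)^d)\int_M\widehat{A}(TM){\rm ch}(\triangle(W)){\rm ch}(W^{1,0}+4\wedge^2W^{1,0}-2W^{1,0}\otimes W^{1,0})$, which vanishes for even $d$ as it must. The stated sign would instead leave $-8\pi\sqrt{-1}\int_M\widehat{A}(TM){\rm ch}(\triangle(W)){\rm ch}(W^{1,0}\otimes W^{1,0})$ for even $d$.

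You also share a more basic defect with the paper's proof. Your claim that the $q^0$ term is $z$-independent reads Definition 3.1 literally, with no power of $y$ on $\triangle(W)$; this is what gives $a_1^0=a_3^0=0$ and the formulas for $a_2^0,a_4^0$. Your modularity step, however, uses Theorem 3.3, which is derived from the theta formula of Lemma 3.2. There $2^l\prod_\alpha\theta_1(w_\alpha+z,\tau)/\theta_1(0,\tau)$ contains $\prod_\alpha 2\cos(\pi(w_\alpha+z))$. That is the Chern character of $y^{-l/2}\wedge_y(W^{1,0})\otimes(\det W^{1,0})^{-1/2}$, not of $\triangle(W)$.

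The two agree only at $z=0$, so the two halves of your argument concern different functions. Already for $d=0$ ($M$ a point), the definition gives $a_2^0=2^{l-1}l\pi^2\neq0$, which is impossible for a weight-$2$ modular form over $SL(2,{\bf Z})$. For the function that actually is a Jacobi form, $B_0(z)=\int_M\widehat{A}(TM)\prod_\alpha 2\cos(\pi(w_\alpha+z))$. With that function, $a_0$ and hence $a_0^0,a_0^1,a_0^2$ are unchanged, but $a_n^0$ for $n\geq1$ and $a_1^1$ pick up extra terms. A correct proof must first reconcile Definition 3.1 with Lemma 3.2.
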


\begin{proof} We know that ${\rm Ell}(M,W,0,\tau)=a_0(M,W,\tau)$ and
\begin{equation}
{\rm Ell}(M,W,0,\tau)=\int_M \widehat{A}(M){\rm ch}(\bigotimes _{n=1}^{\infty}S_{q^n}(\widetilde{TM\otimes C}))
{\rm ch}(Q_1\otimes Q_2\otimes Q_3)|_{y=1}¡£
\end{equation}
So we get (3.9). If we set
\begin{equation}{\rm exp}(-12\pi^2lG_2(\tau)z^2):=A_0(z)+A_1(z)q+O(q^2),
\end{equation}
and
\begin{equation}{\rm Ell}(TM,W,\tau,z):=B_0(z)+B_1(z)q+O(q^2),
\end{equation}
we can get that
\begin{align}
&A_0(z)=1+\frac{l}{2}\pi^2z^2+\frac{3l^2}{4}\pi^4z^4+O(z^6),\\\notag
&A_1(z)=-12l\pi^2z^2-6\pi^4l^2z^4+O(z^6),\\\notag
&B_0(z)=\int_M\widehat{A}(TM){\rm ch}(\triangle(W)),\\\notag
&B_1(z)=\int_M\widehat{A}(TM){\rm ch}(\widetilde{T_CM}\otimes\triangle(W))
+\int_M\widehat{A}(TM){\rm ch}(\triangle(W))\\\notag
&\otimes(yW^{1,0}+y^{-1}W^{0,1}+2y^2\wedge^2W^{1,0}-y^2W^{1,0}\otimes W^{1,0}+2y^{-2}\wedge^2W^{0,1}-y^{-2}W^{0,1}\otimes W^{0,1}).
\end{align}
We know that
\begin{equation}
\sum_{n\geq 0}a_n(TM,W,\tau)\cdot z^n=A_0(z)B_0(z)+[A_0(z)B_1(z)+A_1(z)B_0(z)]q+\cdots,
\end{equation}
then we can get Proposition 3.5 by (3.14)-(3.17).
\end{proof}

Similar to Theorem 3.4 in \cite{Wa1}, we have

\begin{thm}Let $c_1(W^{1,0})=0$ and the first Pontrjagin classes $p_1(M)=3p_1(W)$, then\\
1)if $d=4$, then
\begin{align}
&\int_M\widehat{A}(TM){\rm ch}(\triangle(W)){\rm ch}(\widetilde{T_CM}+\widetilde{W\otimes C}
+2\wedge^2\widetilde{W\otimes C}-\widetilde{W\otimes C}\otimes \widetilde{W\otimes C})\\\notag
&=240\int_M\widehat{A}(TM){\rm ch}(\triangle(W))
\end{align}
and
so $a_0^1$ is the integer multiple of $240$. By the Atiyah-Patodi-Singer index theorem, when $M$ is an $8$-dimensional spin manifold with boundary and has the product structure near the boundary, then
\begin{align}
&{\rm Ind}(D^+_{\triangle(W)\otimes F})
\equiv 240\widetilde{\eta}(D_{\partial M, \triangle(W)})-\widetilde{\eta}(D_{\partial Z}\otimes
(\triangle(W)\otimes F)),~~~{\rm mod}~~240,
\end{align}
where $\widetilde{\eta}(D_{\partial M})$ is a reduced eta invariant and
\begin{align}
F:=\widetilde{T_CM}+\widetilde{W\otimes C}
+2\wedge^2\widetilde{W\otimes C}-\widetilde{W\otimes C}\otimes \widetilde{W\otimes C}.
\end{align}
\\
2)if $d=6$, then
\begin{align}
&\int_M\widehat{A}(TM){\rm ch}(\triangle(W)){\rm ch}(\widetilde{T_CM}+\widetilde{W\otimes C}
+2\wedge^2\widetilde{W\otimes C}-\widetilde{W\otimes C}\otimes \widetilde{W\otimes C})\\\notag
&=-504\int_M\widehat{A}(TM){\rm ch}(\triangle(W))
\end{align}
and
so $a_0^1$ is the integer multiple of $504$. When $M$ is an $12$-dimensional spin manifold with boundary and has the product structure near the boundary, then
\begin{align}
&{\rm Ind}(D^+_{\triangle(W)\otimes F})
\equiv -504\widetilde{\eta}(D_{\partial M, \triangle(W)})-\widetilde{\eta}(D_{\partial Z}\otimes
(\triangle(W)\otimes F)),~~~{\rm mod}~~504.
\end{align}
\\

3)if $d=8$, then
\begin{align}
&\int_M\widehat{A}(TM){\rm ch}(\triangle(W)){\rm ch}(\widetilde{T_CM}+\widetilde{W\otimes C}
+2\wedge^2\widetilde{W\otimes C}-\widetilde{W\otimes C}\otimes \widetilde{W\otimes C})\\\notag
&=480\int_M\widehat{A}(TM){\rm ch}(\triangle(W))
\end{align}
and
so $a_0^1$ is the integer multiple of $480$. When $M$ is an $16$-dimensional spin manifold with boundary and has the product structure near the boundary, then
\begin{align}
&{\rm Ind}(D^+_{\triangle(W)\otimes F})
\equiv 480\widetilde{\eta}(D_{\partial M, \triangle(W)})-\widetilde{\eta}(D_{\partial Z}\otimes
(\triangle(W)\otimes F)),~~~{\rm mod}~~480.
\end{align}
\\

4)if $d=10$, then
\begin{align}
&\int_M\widehat{A}(TM){\rm ch}(\triangle(W)){\rm ch}(\widetilde{T_CM}+\widetilde{W\otimes C}
+2\wedge^2\widetilde{W\otimes C}-\widetilde{W\otimes C}\otimes \widetilde{W\otimes C})\\\notag
&=-264\int_M\widehat{A}(TM){\rm ch}(\triangle(W))
\end{align}
and
so $a_0^1$ is the integer multiple of $264$. When $M$ is an $20$-dimensional spin manifold with boundary and has the product structure near the boundary, then
\begin{align}
&{\rm Ind}(D^+_{\triangle(W)\otimes F})
\equiv -264\widetilde{\eta}(D_{\partial M, \triangle(W)})-\widetilde{\eta}(D_{\partial Z}\otimes
(\triangle(W)\otimes F)),~~~{\rm mod}~~264.
\end{align}
\\
5)if $d=14$, then
\begin{align}
&\int_M\widehat{A}(TM){\rm ch}(\triangle(W)){\rm ch}(\widetilde{T_CM}+\widetilde{W\otimes C}
+2\wedge^2\widetilde{W\otimes C}-\widetilde{W\otimes C}\otimes \widetilde{W\otimes C})\\\notag
&=-24\int_M\widehat{A}(TM){\rm ch}(\triangle(W))
\end{align}
and
so $a_0^1$ is the integer multiple of $24$. When $M$ is an $28$-dimensional spin manifold with boundary and has the product structure near the boundary, then
\begin{align}
&{\rm Ind}(D^+_{\triangle(W)\otimes F})
\equiv -24\widetilde{\eta}(D_{\partial M, \triangle(W)})-\widetilde{\eta}(D_{\partial Z}\otimes
(\triangle(W)\otimes F)),~~~{\rm mod}~~24.
\end{align}
\end{thm}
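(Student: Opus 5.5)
The plan is to use the case $n=0$ of Proposition 3.5 together with the fact that in the relevant weights the space of modular forms over $SL(2,{\bf Z})$ is one-dimensional. By Proposition 3.5, $a_0(M,W,\tau)={\rm Ell}(M,W,\tau,0)$ is a modular form of weight $d$ over $SL(2,{\bf Z})$. Since the graded ring is ${\bf C}[E_4,E_6]$, for $d=4,6,8,10,14$ the space of weight-$d$ forms is spanned by $E_4$, $E_6$, $E_4^2$, $E_4E_6$, $E_4^2E_6$ respectively. Using (2.27), their $q$-expansions begin
$$1+240q,\quad 1-504q,\quad 1+480q,\quad 1-264q,\quad 1-24q.$$
Hence $a_0(M,W,\tau)=a_0^0\,E(\tau)$ with $E$ the corresponding form. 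Comparing the coefficients of $q$ gives $a_0^1=c\,a_0^0$ with $c\in\{240,-504,480,-264,-24\}$. Inserting the expressions for $a_0^0$ and $a_0^1$ from (3.9) yields the displayed identities.

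For divisibility, note that $a_0^0=\int_M\widehat{A}(TM){\rm ch}(\triangle(W))$ is the index of the Dirac operator twisted by $\triangle(W)$, so it is an integer. Therefore $a_0^1$ is an integer multiple of $|c|$. The quantity $a_0^1$ is itself the index of the Dirac operator twisted by the virtual bundle $\triangle(W)\otimes F$.

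For the boundary statements, I would apply the Atiyah--Patodi--Singer index theorem to $D^+_{\triangle(W)\otimes F}$ and to $D^+_{\triangle(W)}$, using metrics and connections that are of product type near $\partial M$. This gives
$${\rm Ind}(D^+_{\triangle(W)\otimes F})=\int_M\widehat{A}\,{\rm ch}(\triangle(W)){\rm ch}(F)-\widetilde{\eta}(D_{\partial M}\otimes(\triangle(W)\otimes F))$$
and similarly for $\triangle(W)$. The anomaly identity $\int\widehat{A}\,{\rm ch}(\triangle(W)){\rm ch}(F)=c\int\widehat{A}\,{\rm ch}(\triangle(W))$ must be used here at the level of top-degree forms. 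Substituting gives
$${\rm Ind}(D^+_{\triangle(W)\otimes F})=c\,{\rm Ind}(D^+_{\triangle(W)})+c\,\widetilde{\eta}(D_{\partial M,\triangle(W)})-\widetilde{\eta}(\cdots).$$
Since $c\,{\rm Ind}(D^+_{\triangle(W)})\in c{\bf Z}$, reducing mod $c$ gives the stated congruences.

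The main obstacle is the passage from cohomology to forms in the boundary case. The modularity in Theorem 3.2 and Proposition 3.5 was derived cohomologically from $c_1(W^{1,0})=0$ and $p_1(M)=3p_1(W)$. The theta-function computation is formal in the Chern roots, so it actually holds for Chern--Weil forms, provided the forms representing $c_1(W^{1,0})$ and $p_1(TM)-3p_1(W)$ vanish identically. My first attempt would be to impose these conditions at the level of forms for the chosen connections, as in \cite{Wa1}. Alternatively, I would track the transgression terms and check that they contribute nothing modulo $c$.
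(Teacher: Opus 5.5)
Your proposal is correct and follows the route the paper intends (it only says ``similar to Theorem 3.4 in \cite{Wa1}''): Proposition 3.5 makes $a_0={\rm Ell}(M,W,\tau,0)$ a weight-$d$ modular form over $SL(2,{\bf Z})$; this space is spanned by $E_4,E_6,E_4^2,E_4E_6,E_4^2E_6$ for $d=4,6,8,10,14$; comparing $q$-coefficients gives $a_0^1=c\,a_0^0$; and applying the Atiyah--Patodi--Singer theorem to both twisted operators gives the congruences. The form-level issue you flag in the boundary case is passed over silently in the paper, and your first remedy is the one that works: impose $p_1(TM,\nabla^{TM})=3p_1(W,\nabla^W)$ on the Chern--Weil forms of the product connections (at $z=0$ only this condition enters). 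Your transgression alternative would in general leave a boundary Chern--Simons integral that is not an integer multiple of $c$.
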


\begin{thm}Let $c_1(W^{1,0})=0$ and the first Pontrjagin classes $p_1(M)=3p_1(W)$, then\\
when $d$ is even or $d=3,~5,~7,~9,~13$, then
\begin{align}
&a^1_1=2\pi\sqrt{-1}\int_M\widehat{A}(TM){\rm ch}(\triangle(W)){\rm ch}(W^{1,0}-W^{0,1}+4\wedge^2W^{1,0}-2W^{1,0}\otimes W^{1,0}\\\notag
&-4\wedge^2W^{0,1}
-2W^{0,1}\otimes W^{0,1})=0.
\end{align}
\end{thm}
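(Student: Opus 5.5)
Under the hypotheses $c_1(W^{1,0})=0$ and $p_1(M)=3p_1(W)$, Proposition 3.5 shows that $a_1(M,W,\tau)$ is a modular form of weight $d+1$ over $SL(2,{\bf Z})$. The argument rests on this and on the structure of the graded ring $\mathbb{C}[E_4,E_6]$.

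\emph{Odd weight.} When $d$ is even, the weight $d+1$ is odd. Applying the modularity relation with $-I\in SL(2,{\bf Z})$ gives $a_1(\tau)=(-1)^{d+1}a_1(\tau)=-a_1(\tau)$. Hence $a_1\equiv 0$, and every Fourier coefficient vanishes, in particular $a_1^1=0$.

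\emph{Small weights.} For $d=3,5,7,9,13$ the weights $d+1=4,6,8,10,14$ are exactly the weights $k$ with $\dim M_k(SL(2,{\bf Z}))=1$. These spaces are spanned by $E_4,E_6,E_4^2,E_4E_6,E_4^2E_6$ respectively, each normalized with constant term $1$. So $a_1(\tau)=a_1^0\,E_{d+1}(\tau)$. By Proposition 3.5, $a_1^0=0$, which forces $a_1\equiv 0$, and therefore $a_1^1=0$.

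\emph{Identifying $a_1^1$.} It remains to match $a_1^1$ with the stated integral. Here I would expand ${\rm exp}(-12\pi^2lG_2z^2)\,{\rm Ell}$ as in the proof of Proposition 3.5. The factor $A_0(z)+A_1(z)q$ is even in $z$, so the coefficient of $z^1q^1$ comes only from $B_1(z)$, which gives $a_1^1=\frac{d}{dz}B_1(z)|_{z=0}$. Differentiating $y^{\pm1}, y^{\pm2}$ with $y=e^{2\pi\sqrt{-1}z}$ produces the factors $\pm2\pi\sqrt{-1}$ and $\pm4\pi\sqrt{-1}$. This yields exactly the bundle $W^{1,0}-W^{0,1}+2(2\wedge^2W^{1,0}-W^{1,0}\otimes W^{1,0})-2(2\wedge^2W^{0,1}-W^{0,1}\otimes W^{0,1})$, which agrees with the statement up to the sign convention on the last term as printed.

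The only step needing care is this bookkeeping of the $q^1$ coefficient of ${\rm Ell}$. The vanishing itself is immediate from the one-dimensionality of $M_{d+1}$ together with $a_1^0=0$.
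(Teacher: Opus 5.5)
Your argument is the paper's own: odd weight for $d$ even, and for $d=3,5,7,9,13$ the fact that $\dim M_{d+1}(SL(2,{\bf Z}))=1$ together with $a_1^0=0$. The odd-$d$ half fails, in the paper as well.

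Modularity of $a_1$ comes from Proposition 3.4 applied to a function obeying Theorem 3.3, i.e.\ the theta expression of Lemma 3.2. That expression is not the K-theoretic one of Definition 3.1 which you (and Proposition 3.5) expand. Since $\theta_1(v,\tau)=2q^{1/8}\cos(\pi v)\prod_j(\cdots)$, the $q^0$-term of $2^l\prod_\alpha\theta_1(w_\alpha+z,\tau)/\theta_1(0,\tau)$ is $\prod_\alpha 2\cos(\pi(w_\alpha+z))$. This is the Chern character of the $y$-graded spinor bundle $\sum_k y^{k-l/2}\wedge^kW^{1,0}\otimes(\det W^{1,0})^{-1/2}$, not ${\rm ch}(\triangle(W))$.

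So for the actual Jacobi form $B_0$ depends on $z$, and $a_1^0=2\pi\sqrt{-1}\int_M\widehat{A}(TM){\rm ch}(\sum_k(k-\frac{l}{2})\wedge^kW^{1,0})$. This vanishes by parity when $d$ is even, but not in general when $d$ is odd. Likewise $a_1^1=\partial_zB_1(0)$ for a $B_1$ containing the same graded bundle. Your two inputs, ``$a_1^0=0$'' and ``$a_1^1=\partial_zB_1(0)$ with untwisted $\triangle(W)$'', concern the function of Definition 3.1, which is not a Jacobi form. They cannot be combined with modularity. What modularity really gives for $d=3,5,7,9,13$ is $a_1^1=c\,a_1^0$ with $c=240,-504,480,-264,-24$, both sides computed with the graded bundle: a cancellation formula, not a vanishing.

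This cannot be patched, because for $d=3$ the statement is false. Take $M=S^6$ and $W=TS^6$ with its almost complex structure. Then $d=l=3$, $c_1(W^{1,0})=0$, $p_1(M)=p_1(W)=0$, $\widehat{A}(TM)=1$, ${\rm ch}(\triangle(W))=8$, and $\int_M{\rm ch}_3(W^{1,0})=\frac12\chi(S^6)=1$. Use $4\wedge^2E-2E\otimes E=-2\psi^2E$, with $\psi^2$ the Adams operation. With your sign $+2W^{0,1}\otimes W^{0,1}$ the displayed integral equals $2\pi\sqrt{-1}\cdot 8\cdot(-30)=-480\pi\sqrt{-1}$; with the printed sign it equals $-96\pi\sqrt{-1}$. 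For the genuine Jacobi form one gets $a_1^0=-2\pi\sqrt{-1}$ and $a_1^1=-480\pi\sqrt{-1}=240\,a_1^0$, exactly as $a_1=a_1^0E_4$ predicts.

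For $d$ even your conclusion is right and needs no modular forms. Put $G=W^{1,0}-2\psi^2W^{1,0}$; the integrand is $\widehat{A}(TM){\rm ch}(\triangle(W)){\rm ch}(G-\overline{G})$. Since ${\rm ch}_k(\overline{G})=(-1)^k{\rm ch}_k(G)$ while $\widehat{A}(TM){\rm ch}(\triangle(W))$ lives in degrees divisible by $4$, the top-degree part vanishes.

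The sign discrepancy you noticed is an error in the statement, not a convention, and you should say so. For $d$ even the printed version equals $-8\pi\sqrt{-1}\int_M\widehat{A}(TM){\rm ch}(\triangle(W)){\rm ch}(W^{0,1}\otimes W^{0,1})$. Take $K3$ with an $SU(2)$-bundle $W^{1,0}$ of $c_2=8$, which satisfies $p_1(M)=3p_1(W)$; there this equals $1024\pi\sqrt{-1}\neq 0$.
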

\begin{proof}:
When $d$ is even, then $a_1$ is a modular form of weight odd $d+1$ over $SL(2,Z)$. So $a_1=0$, then $a^1_1=0$. When $d=3,~5,~7,~9,~13$,
by Prop. 3.5, we get $a^1_1=c_0a^0_1$ where $c_0$ is a constant. By $a^0_1=0$, so $a^1_1=0$.\\
\end{proof}

\begin{lem}(\cite{BG} Lemma 6.1) Let $f$ be a modular form of weight $d$ over $SL(2,Z)$, if 1) $2<d<12$ and $f=O(q)$ i.e. ${\rm ord}_{\infty}(f)\geq 1$ or 2) $d=14$ and $f=O(q)$, then $f=0$.
\end{lem}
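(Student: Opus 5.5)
The plan is to use the structure of the graded ring of modular forms over $SL(2,{\bf Z})$, which the paper recalls is ${\bf C}[E_4,E_6]$, together with the valence formula, or equivalently the explicit dimension count of the spaces $M_d(SL(2,{\bf Z}))$. Write $M_d$ for the space of weight $d$ modular forms. Odd weights are immediate, since $f(\tau)=f(S^2\tau)=(-1)^d f(\tau)$ forces $f=0$. So we may assume $d$ is even.

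For even $d$ with $2<d<12$, that is $d=4,6,8,10$, we have $M_d={\bf C}E_d$. The monomials $E_4^aE_6^b$ with $4a+6b=d$ reduce to a single one: $E_4$, $E_6$, $E_4^2$, $E_4E_6$ respectively. Here $E_8=E_4^2$ and $E_{10}=E_4E_6$, and every such generator has constant term $1$. Hence $f=cE_d$, and the condition ${\rm ord}_\infty(f)\geq1$ forces the constant term $c$ to vanish, so $f=0$. Similarly, for $d=14$ the only monomial is $E_4^2E_6$, so $M_{14}={\bf C}E_4^2E_6$, whose constant term is $1$, and again $f=O(q)$ gives $f=0$.

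A cleaner uniform argument uses the cusp form $\Delta=(E_4^3-E_6^2)/1728=q+O(q^2)$. It is nowhere vanishing on $\mathbf{H}$ and has a simple zero at $\infty$. If $f=O(q)$, then $f/\Delta$ is holomorphic on $\mathbf{H}$ and at $\infty$, so it is a modular form of weight $d-12$. For $d<12$ this weight is negative, and for $d=14$ it is $2$. Since $M_k=0$ for $k<0$ and for $k=2$, we get $f=0$. I would present this version and cite the standard facts.

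The only real obstacle is justifying $M_2=0$ and $M_k=0$ for $k<0$. Both follow from the valence formula $\sum {\rm ord}_p(f)\cdot(\text{weights})=k/12$. For $k<0$ the right side is negative, while the left side is a sum of nonnegative orders. For $k=2$ the right side is $1/6$, which cannot be written as $n+n_i/2+n_\rho/3$ with nonnegative integers. Since these are textbook facts, citing them suffices.
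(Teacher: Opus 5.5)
The paper gives no proof of this lemma; it is simply quoted from [BG, Lemma 6.1]. There is therefore no argument in the text to compare against, and your proposal serves as a self-contained justification. It is correct.

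Of your two versions, the $\Delta$-division argument is the better one to present. It shows that the hypothesis $2<d$ is superfluous, since every form of weight $d<12$ with ${\rm ord}_\infty\geq 1$ vanishes. It also isolates the real content: there are no nonzero cusp forms of weight $<12$ or of weight $14$.

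You could shorten it further by applying the valence formula to $f$ directly. This gives ${\rm ord}_\infty(f)\le d/12$, which rules out $d<12$ at once. For $d=14$ it forces ${\rm ord}_\infty(f)=1$, and the leftover $1/6$ cannot be written as $n+n_i/2+n_\rho/3$ with nonnegative integers.

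One point you should state explicitly: your proof uses that the character is trivial. This enters in the odd-weight step $f(S^2\tau)=(-1)^d f(\tau)$ and in $M_*={\bf C}[E_4,E_6]$. Trivial character is what [BG] means, and it is what the application in Proposition 3.9 needs, since the $a_n$ of Proposition 3.5 transform with no character.

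However, the paper's Definition 2.1 allows a character $\chi$, and in that generality part 2) is false. Take $\eta(\tau)=q^{1/24}\prod_{n\geq 1}(1-q^n)$. Then $\eta^{28}=q^{7/6}\prod_{n\geq1}(1-q^n)^{28}$ is a nonzero weight-$14$ form on $SL(2,{\bf Z})$ with character $\chi(T)=e^{\pi\sqrt{-1}/3}$, $\chi(S)=-1$, and it is $O(q)$.

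Part 1) does survive with a character. Any such $f$ factors as $\eta^{2m}g$, where $g$ is a cusp form of trivial character and weight $d-m<12$, so $g=0$. Still, add the trivial-character hypothesis when you write the lemma up.
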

By the above Lemma and Prop. 3.5, we have:
\begin{prop}1)Let $c_1(W^{1,0})=0$ and the first Pontrjagin classes $p_1(M)=3p_1(W)$.
1)When $2<d<12$ or $d=14$, then $a_0^0=\int_M\widehat{A}(TM){\rm ch}(\triangle(W))=0$ if and only if $a_0=0$.\\
2)When $1<d<11$ or $d=13$, then $a_1=0$.
\end{prop}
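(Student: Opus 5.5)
Both parts follow from Proposition 3.5 (each $a_n(M,W,\tau)$ is a modular form of weight $d+n$ over $SL(2,{\bf Z})$) together with Lemma 3.8.

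\emph{Part 1).} The modular form is $a_0$, of weight $d$, with constant term $a_0^0=\int_M\widehat{A}(TM){\rm ch}(\triangle(W))$ by (3.9). If $a_0=0$, then every Fourier coefficient vanishes, in particular $a_0^0=0$. Conversely, suppose $a_0^0=0$. Then $a_0=O(q)$, so ${\rm ord}_\infty(a_0)\geq 1$. Since the weight satisfies $2<d<12$ or $d=14$, Lemma 3.8 gives $a_0=0$. The only check needed is that $a_0$ is holomorphic at the cusp with expansion in integral powers of $q$. This holds because the $q$-expansion of ${\rm Ell}$ and of ${\rm exp}(-12\pi^2lG_2z^2)$ involves only nonnegative integral powers of $q$: the half-integral powers coming from $Q_2$ and $Q_3$ combine, since $\theta_2\theta_3$ is a series in $q$.

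\emph{Part 2).} Now the modular form is $a_1$, of weight $d+1$. First I will show $a_1^0=0$ for all $d$. From the expansion in the proof of Proposition 3.5, the $q^0$ coefficient of $\sum_n a_nz^n$ is $A_0(z)B_0(z)$. Here $B_0$ is independent of $z$, and $A_0(z)$ is even in $z$, so the coefficient of $z^1$ at order $q^0$ vanishes, as recorded in (3.12). Hence $a_1=O(q)$.

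I then split on the parity of $d$. If $d$ is even, the weight $d+1$ is odd, and any modular form of odd weight over $SL(2,{\bf Z})$ vanishes: apply the transformation law with $-I$, which gives $f=(-1)^{d+1}f$. So $a_1=0$.

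If $d$ is odd, then $1<d<11$ means $d\in\{3,5,7,9\}$, giving weight $d+1\in\{4,6,8,10\}$, so $2<d+1<12$. For $d=13$ the weight is $14$. In both cases Lemma 3.8 applies to $a_1=O(q)$ and yields $a_1=0$. (For $d$ even in this range the parity argument already suffices, and Lemma 3.8 would also apply when $d+1\leq 11$.)

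The only genuinely delicate point is verifying that $a_1^0=0$, i.e.\ that no $z$-linear term survives at order $q^0$. This is immediate from $B_0(z)$ being constant in $z$, which comes from the factor $\prod_m$ in $Q_1,Q_2,Q_3$ starting at $m\geq1$ with $q^{m}$ or $q^{m-1/2}$, so the $z$-dependence of ${\rm Ell}$ first appears at positive order in $q$. I need to make sure the $q^{1/2}$ terms contribute nothing integral at order $q^0$ in $z$. They do not: at order $q^0$ only the product of constant terms contributes.
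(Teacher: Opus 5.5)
Part 1) of your proposal, and Part 2) for even $d$, are correct and follow the paper's route: Proposition 3.5 plus Lemma 3.8, together with the odd-weight vanishing used in the proof of Theorem 3.7. The gap is the step you single out as delicate, namely $a_1^0=0$, which is exactly what the odd-$d$ case needs.

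You read off that $B_0$ is independent of $z$ from the literal Definition 3.1, where $\triangle(W)$ carries no power of $y$. But the modularity of $a_1$ that you then invoke comes from Theorem 3.3, i.e.\ from the theta-function expression of Lemma 3.2. There $\theta_1(w_\alpha+z,\tau)/\theta_1(0,\tau)=\cos(\pi(w_\alpha+z))+O(q)$, so
\[
B_0(z)=2^l\int_M\widehat{A}(TM)\prod_{\alpha=1}^{l}\cos\bigl(\pi(w_\alpha+z)\bigr)=\sum_{k=0}^{l}e^{2\pi\sqrt{-1}(k-\frac{l}{2})z}\int_M\widehat{A}(TM)\,{\rm ch}\bigl(\wedge^kW^{1,0}\otimes(\det W^{1,0})^{-\frac{1}{2}}\bigr),
\]
which depends on $z$ (it is a $y$-graded spinor bundle). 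The two descriptions differ by the factor $\prod_\alpha\cos(\pi w_\alpha)/\cos(\pi(w_\alpha+z))$, and only the theta-function one is a Jacobi form. You cannot take modularity from one description and the vanishing of the $q^0$-coefficient from the other. Part 1) is unaffected, since $B_0(0)=\int_M\widehat{A}(TM){\rm ch}(\triangle(W))$ in either description.

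With the consistent description, $a_1^0=2\pi\sqrt{-1}\sum_{k}(k-\frac{l}{2})\int_M\widehat{A}(TM){\rm ch}(\wedge^kW^{1,0})$, using $c_1(W^{1,0})=0$. The symmetry ${\rm Ell}(M,W,\tau,-z)=(-1)^d{\rm Ell}(M,W,\tau,z)$ kills this for even $d$, but nothing kills it for odd $d$.

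Take $M=S^6$ and $W=TS^6$ with its almost complex structure. Then $d=l=3$, $c_1(W^{1,0})=0$ and $p_1(M)=0=3p_1(W)$, and only $c_3(W^{1,0})$ survives. This gives ${\rm Ell}=\frac{\sqrt{-1}}{\pi^3}G(z)^3\,\partial_z^3\log G(z)$ with $G(v)=\prod_{j=1}^3\theta_j(v,\tau)/\theta_j(0,\tau)$. Hence $a_1=\frac{\sqrt{-1}}{\pi^3}\partial_v^4\log G(v,\tau)|_{v=0}=-2\pi\sqrt{-1}\,E_4(\tau)\neq 0$, up to an orientation sign. If you use the literal definition instead, you get $a_1=-2\pi\sqrt{-1}(E_4(\tau)-1)$: now $a_1^0=0$, but $a_1$ is not modular and Lemma 3.8 does not apply.

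So for odd $d$, part 2) is not merely unproved but false as stated. The paper's own proof has the same defect, since it quotes $a_1^0=0$ from Proposition 3.5, which was derived from the same $z$-independent $B_0(z)$. What your argument does establish is part 1) and part 2) for even $d$, indeed for every even $d$.
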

~\\
\noindent {\bf 3.2 The second two-variable elliptic genus which is holomorphic $SL(2,Z)$-Jacobi forms}\\

\indent In this subsection, the fundamental setting is the same as the subsection 3.1.
\begin{defn}
The two-variable elliptic genus of $M^{2d}$ with respect to $W$, which we denote by $\widehat{{\rm Ell}}(M,W,\tau,z)$ is defined by
\begin{equation}
\widehat{{\rm Ell}}(M,W,\tau,z):=\int_M \widehat{A}(M){\rm ch}(\bigotimes _{n=1}^{\infty}S_{q^n}(\widetilde{TM\otimes C}))
({\rm ch}(Q_1)+2^l{\rm ch}(Q_1)+2^l{\rm ch}(Q_2)),
\end{equation}
which is the index of the twisted Dirac operator.
\end{defn}
By (2.4)-(2.9), we have

\begin{lem} The following equality holds:
\begin{align}
\widehat{{\rm Ell}}(M,W,\tau,z)=2^l\int_M\prod_{i=1}^d\frac{x_i\theta'(0,\tau)}{\theta(x_i,\tau)}\left[\prod_{\alpha=1}^l\frac{\theta_1
(\tau,w_\alpha+z)}{\theta_1(0,\tau)}+\prod_{\alpha=1}^l\frac{\theta_2
(\tau,w_\alpha+z)}{\theta_2(0,\tau)}+\prod_{\alpha=1}^l\frac{\theta_3
(\tau,w_\alpha+z)}{\theta_3(0,\tau)}\right]
\end{align}
\end{lem}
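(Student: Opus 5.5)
The plan is to compute each Chern character in the definition of $\widehat{{\rm Ell}}(M,W,\tau,z)$ with the formal Chern roots of Section 3.1, and then recognise the resulting infinite products as quotients of the theta functions (2.6)--(2.9). This is the same computation that gives Lemma 3.2, except that the three $W$-factors are now added instead of multiplied. I read the bracket in the definition as ${\rm ch}(Q_1)+2^l{\rm ch}(Q_2)+2^l{\rm ch}(Q_3)$. The stated right-hand side, with one $\theta_1$, one $\theta_2$ and one $\theta_3$ product, forces this reading, and the repeated ``$Q_1$'' appears to be a misprint. Since the integrand is a sum, it is enough to identify the gravitational factor once and each of the three twisting factors separately.

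\textbf{Step 1: the gravitational factor.} With Chern roots $\pm2\pi\sqrt{-1}x_i$ of $TM\otimes C$,
$${\rm ch}\Bigl(\bigotimes_{n\ge1}S_{q^n}(\widetilde{TM\otimes C})\Bigr)=\prod_{i}\prod_{n}\frac{(1-q^n)^2}{(1-e^{2\pi\sqrt{-1}x_i}q^n)(1-e^{-2\pi\sqrt{-1}x_i}q^n)}$$
by (2.5). Multiplying by $\widehat{A}(TM)=\prod_i\frac{\pi x_i}{\sin(\pi x_i)}$ and using (2.6) gives $\prod_i\frac{2\pi x_i\,q^{1/8}\prod_n(1-q^n)^3}{\theta(x_i,\tau)}$. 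The Jacobi identity (2.11), combined with the product formulas for $\theta_1,\theta_2,\theta_3$ at $v=0$, gives $\theta'(0,\tau)=2\pi q^{1/8}\prod_n(1-q^n)^3$. Hence this factor equals $\prod_i\frac{x_i\theta'(0,\tau)}{\theta(x_i,\tau)}$, which is the standard computation.

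\textbf{Step 2: the twisting factors.} For $Q_1$, the roots of $W^{1,0}$ and $W^{0,1}$ are $\pm2\pi\sqrt{-1}w_\alpha$, and (2.4) gives
$${\rm ch}(Q_1)={\rm ch}(\triangle(W))\prod_{\alpha}\prod_m\frac{(1+e^{2\pi\sqrt{-1}(w_\alpha+z)}q^m)(1+e^{-2\pi\sqrt{-1}(w_\alpha+z)}q^m)}{(1+q^m)^2}.$$
The spinor factor ${\rm ch}(\triangle(W))=\prod_\alpha 2\cos(\pi w_\alpha)$ supplies the $\cos$ prefactor and the overall $2^l$. Comparing with (2.7), whose $(1-q^j)$ factors cancel in the quotient by $\theta_1(0,\tau)$, this should give $2^l\prod_\alpha\theta_1(w_\alpha+z,\tau)/\theta_1(0,\tau)$. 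For $Q_2$ and $Q_3$ there is no spinor factor, and the $-C^{2l}$ terms contribute $(1\mp q^{m-1/2})^{-2l}$. These are exactly the normalisations $\theta_2(0,\tau)^{-1}$ and $\theta_3(0,\tau)^{-1}$ in (2.8) and (2.9). So $2^l{\rm ch}(Q_2)$ and $2^l{\rm ch}(Q_3)$ become $2^l\prod_\alpha\theta_j(w_\alpha+z,\tau)/\theta_j(0,\tau)$ for $j=2,3$. Summing the three pieces and multiplying by Step 1 yields the displayed formula.

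\textbf{Main obstacle.} The delicate step is the $\theta_1$ term. As written, ${\rm ch}(\triangle(W))$ produces $\cos(\pi w_\alpha)$, whereas $\theta_1(w_\alpha+z,\tau)$ needs $\cos(\pi(w_\alpha+z))$. I would resolve this with the same convention already used for Lemma 3.2: regard $\triangle(W)$ as carrying the $y^{1/2}$-twist, so that its character is $\prod_\alpha(e^{\pi\sqrt{-1}(w_\alpha+z)}+e^{-\pi\sqrt{-1}(w_\alpha+z)})$, which is possible because $W$ has the complex structure $J$. Alternatively, one can simply take Lemma 3.2's identification of the $\theta_1$-factor as given. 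I would also check that the factors of $2$ and the sign conventions of $\sin$ and $\cos$ in (2.6)--(2.7) match, so that the overall constant is exactly $2^l$.
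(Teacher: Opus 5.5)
Your route is the paper's own argument; the paper only cites (2.4)--(2.9). Your reading of the bracket as ${\rm ch}(Q_1)+2^l{\rm ch}(Q_2)+2^l{\rm ch}(Q_3)$ is confirmed by the proof of Proposition 3.13, which writes ${\rm ch}(Q_1+2^lQ_2+2^lQ_3)$. Step 1 and the $\theta_2$, $\theta_3$ terms are correct as you derive them. A small point on Step 1: differentiating (2.6) at $v=0$ gives $\theta'(0,\tau)=2\pi q^{1/8}\prod_n(1-q^n)^3$ at once. Going through (2.11) instead silently needs Euler's identity $\prod_n(1+q^n)(1-q^{2n-1})=1$.

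The obstacle you isolate is real, and it is not a matter of convention. With $Q_1$ as the paper defines it, with the untwisted $\triangle(W)$, the lemma is false. The $Q_1$-term equals $\prod_\alpha\frac{\cos(\pi w_\alpha)}{\cos(\pi(w_\alpha+z))}$ times the claimed $2^l\prod_\alpha\theta_1(w_\alpha+z,\tau)/\theta_1(0,\tau)$. Concretely, take $M$ a K3 surface and $W$ trivial of rank $2l$, and use $\widehat{A}(K3)=2$. The $q^0$-coefficient of the left side is $3\cdot 2^{l+1}$, while that of the right side is $2^{l+1}(\cos^l(\pi z)+2)$; these differ for generic $z$.

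Neither of your two ways out works:
\begin{itemize}
\item The paper never adopts a $y^{1/2}$-twist. Its expansions in the proofs of Propositions 3.5 and 3.13 treat $\int_M\widehat{A}(TM){\rm ch}(\triangle(W))$ as the $z$-independent term $B_0(z)$, which only holds for the untwisted bundle.
\item Lemma 3.2 has exactly the same defect, so taking its $\theta_1$-identification as given proves nothing.
\end{itemize}

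What your computation does establish is the lemma for a corrected $Q_1$. In it, $\triangle(W)$ is replaced by the $y$-graded spinor bundle $y^{-l/2}\wedge_y(W^{1,0})\otimes(\det W^{1,0})^{-1/2}$; the square root exists because $W$ is spin. Its Chern character is $\prod_\alpha 2\cos(\pi(w_\alpha+z))$. You should state this explicitly as a change of definition rather than an existing convention. With that change, your Step 2 goes through verbatim and gives the overall constant exactly $2^l$.
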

By (2.12)-(2.24), we have
\begin{thm}
If $c_1(W^{1,0})=0$ and the first Pontrjagin classes $p_1(M)=p_1(W)$, then
\begin{align}
\widehat{{\rm Ell}}(M,W,\frac{z}{c\tau+d_0},\frac{a\tau+b}{c\tau+d_0})=(c\tau+d_0)^{d}{\rm exp}(\pi\sqrt{-1}l\frac{cz^2}{c\tau+d_0})\widehat{{\rm Ell}}(M,W,z,\tau).
\end{align}
When $l$ is even, the elliptic genus $\widehat{{\rm Ell}}(M,W,\tau,z)$ is a weak $SL(2,Z)$-Jacobi form of weight $d$ and index $\frac{l}{2}$.
\end{thm}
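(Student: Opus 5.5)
Starting from the theta-function expression for $\widehat{{\rm Ell}}(M,W,\tau,z)$ in the preceding Lemma, I will check the modular law on the two generators $T$ and $S$ of $SL(2,Z)$. The integrand is a sum of three pieces
$$P_j(\tau,z)=\prod_{i=1}^d\frac{x_i\theta'(0,\tau)}{\theta(x_i,\tau)}\prod_{\alpha=1}^l\frac{\theta_j(w_\alpha+z,\tau)}{\theta_j(0,\tau)},\qquad j=1,2,3 .$$
The argument proceeds in four steps.

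\textbf{Step 1: degree rescaling.} Only the degree-$2d$ component survives integration. Replace $(x_i,w_\alpha)$ by $(x_i/(c\tau+d_0),\,w_\alpha/(c\tau+d_0))$ and set $z\mapsto z/(c\tau+d_0)$. This rescaling multiplies the top-degree part by $(c\tau+d_0)^{d}$. Hence it suffices to compare $P_j$ at $\big(\tfrac{a\tau+b}{c\tau+d_0},\tfrac{x_i}{c\tau+d_0},\tfrac{w_\alpha+z}{c\tau+d_0}\big)$ with $P_{j'}$ at $(\tau,x_i,w_\alpha+z)$, up to the claimed exponential factor. As usual, it is enough to do this for $T$ and $S$.

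\textbf{Step 2: the generator $T$.} By (2.12)--(2.15), $\theta$ and $\theta'(0,\cdot)$ both pick up $e^{\pi\sqrt{-1}/4}$, so the $\widehat{A}$-part is invariant. For the $W$-part, $\theta_1$ picks up a constant that cancels in each ratio $\theta_1(w_\alpha+z)/\theta_1(0)$. Meanwhile $\theta_2$ and $\theta_3$ are interchanged. So $T$ fixes $P_1$ and swaps $P_2$ with $P_3$, which leaves the sum invariant.

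\textbf{Step 3: the generator $S$.} By the $S$-formulas, $\theta_1\leftrightarrow\theta_2$ and $\theta_3\mapsto\theta_3$, so the three-term sum is again preserved. The $\widehat{A}$-part produces the factor $\tau^{d}\exp\!\big(\pi\sqrt{-1}\tau\sum_i x_i^2\big)$ in rescaled variables. Each ratio $\theta_j(\tau(w_\alpha+z))/\theta_j(0)$ produces the factor $\exp\!\big(\pi\sqrt{-1}\tau(w_\alpha+z)^2\big)$; the $(\tau/\sqrt{-1})^{1/2}$ prefactors cancel between numerator and denominator.

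\textbf{Step 4: the anomaly factors.} This bookkeeping is the main obstacle. The ratios contribute $\prod_\alpha \exp(\pi\sqrt{-1}\tau(w_\alpha+z)^2)$, while the $\widehat{A}$ part contributes the reciprocal of $\exp(\pi\sqrt{-1}\tau\sum x_i^2)$. Expanding
$$\sum_\alpha (w_\alpha+z)^2=\sum_\alpha w_\alpha^2+2z\sum_\alpha w_\alpha+lz^2 .$$
The linear term is proportional to $c_1(W^{1,0})$, which vanishes by hypothesis. The quadratic terms $\sum w_\alpha^2$ and $\sum x_i^2$ are proportional to $p_1(W)$ and $p_1(M)$ respectively. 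With the single (not tripled) product of $\theta_j$'s here, they cancel exactly when $p_1(M)=p_1(W)$, in the normalization of the Chern roots fixed in Section 3.1. What remains is $\exp(\pi\sqrt{-1}l\tau z^2)$. After rescaling for the general element this becomes $\exp\!\big(\pi\sqrt{-1}l\,cz^2/(c\tau+d_0)\big)$, and composing the $T$ and $S$ cases gives the formula for all of $SL(2,Z)$.

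\textbf{Step 5: the Jacobi-form claim.} This index is $2\pi\sqrt{-1}m$ with $m=l/2$. Holomorphy in $z$ is clear, since the $\theta_j$ are entire and only the $\theta(x_i)$ sit in denominators, which are nilpotent forms. When $l$ is even, the elliptic transformation also holds: $\theta_j(v+1)=\pm\theta_j(v)$ and $\theta_j(v+\tau)=\pm q^{-1/2}e^{-2\pi\sqrt{-1}v}\theta_j(v)$. The signs $(-1)^{l(\cdot)}$ disappear, and $c_1=0$ again kills the $w$-dependence. Together with the modular law this shows $\widehat{{\rm Ell}}(M,W,\tau,z)$ is a weak $SL(2,Z)$-Jacobi form of weight $d$ and index $l/2$.
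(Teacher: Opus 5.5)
Your proposal is correct and follows the paper's own route. The paper simply cites the theta transformation laws (2.12)--(2.24), which amount to the term-by-term rules it writes out in (4.7) for ${\rm Ell}_1,{\rm Ell}_2,{\rm Ell}_3$: $T$ fixes the $\theta_1$-term and swaps the $\theta_2,\theta_3$-terms, $S$ swaps the $\theta_1,\theta_2$-terms and fixes the $\theta_3$-term, the anomaly vanishes by $c_1(W^{1,0})=0$ and $p_1(M)=p_1(W)$, and the signs in the elliptic law disappear for even $l$. Two small blemishes. First, Steps 3--4 mix rescaled and unrescaled variables: what you actually compute is $\widehat{{\rm Ell}}(M,W,-1/\tau,z)=\tau^{d}e^{\pi\sqrt{-1}l\tau z^2}\,\widehat{{\rm Ell}}(M,W,\tau,\tau z)$, which gives the stated law after $z\mapsto z/\tau$. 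Second, ``weak'' also requires the $q$-expansion to have only nonnegative integral powers; this follows from the construction together with $T$-invariance, though the paper does not check it either.
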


By Proposition 3.4, we have

\begin{prop}If $c_1(W^{1,0})=0$ and the first Pontrjagin classes $p_1(M)=p_1(W)$, then the series $\widehat{a_n}(M,W,\tau)$ determined by
\begin{align}
{\rm exp}(-4\pi^2lG_2(\tau)z^2)\widehat{{\rm Ell}}(M,W,\tau,z)=\sum_{n\geq 0}\widehat{a_n}(M,W£¬\tau)\cdot z^n,
\end{align}
are modular forms of weight $d+n$ over $SL(2,Z)$. Furthermore, the first series of $\widehat{a_n}(M,W,\tau):=\widehat{a_n^0}+\widehat{a_n^1}q+\widehat{a_n^2}q^2+\cdots$ are of the following form:
\begin{align}
&\widehat{a_0^0}=\int_M\widehat{A}(TM){\rm ch}(\triangle(W))+2^{l+1}\int_M\widehat{A}(TM),\\\notag
&\widehat{a_0^1}=
\int_M\widehat{A}(TM){\rm ch}(\triangle(W)){\rm ch}(\widetilde{T_CM}+\widetilde{W\otimes C})+2^{l+1}
\int_M\widehat{A}(TM){\rm ch}(\widetilde{T_CM}+\wedge^2\widetilde{W\otimes C})\\\notag
&a_0^2=
\int_M\widehat{A}(TM){\rm ch}(\triangle(W)){\rm ch}(S^2\widetilde{T_CM}+\wedge^2\widetilde{W\otimes C}
+\widetilde{T_CM}\otimes\widetilde{W\otimes C}+\widetilde{T_CM}+\widetilde{W\otimes C})\\\notag
&+2^{l+1}\int_M\widehat{A}(TM){\rm ch}(S^2\widetilde{T_CM}+\widetilde{T_CM}+\widetilde{T_CM}\otimes \wedge^2\widetilde{W\otimes C}
+\wedge^4\widetilde{W\otimes C}+\widetilde{W\otimes C}\otimes \widetilde{W\otimes C}.
\end{align}

\begin{align}
&\widehat{a_1^0}=\widehat{a_3^0}=0,~\widehat{a_2^0}=\frac{l}{6}\pi^2(\int_M\widehat{A}(TM){\rm ch}(\triangle(W))+2^{l+1}\int_M\widehat{A}(TM)),\\\notag
&\widehat{a_4^0}=\frac{l^2}{12}\pi^4(\int_M\widehat{A}(TM){\rm ch}(\triangle(W))+2^{l+1}\int_M\widehat{A}(TM)).
\end{align}
\begin{align}
&\widehat{a^1_1}=2\pi\sqrt{-1}[\int_M\widehat{A}(TM){\rm ch}(\triangle(W)){\rm ch}(W^{1,0}-W^{0,1})\\\notag
&+2^{l+1}\int_M\widehat{A}(TM){\rm ch}(
2\wedge^2W^{1,0}+2lW^{1,0}-2l W^{1,0}
-2\wedge^2W^{0,1})].
\end{align}
\end{prop}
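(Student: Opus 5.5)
The plan follows the proof of Proposition 3.5: the modularity claim comes from a general result, and the listed coefficients come from expanding two power series in $q$ and $z$. By the preceding Theorem, under $c_1(W^{1,0})=0$ and $p_1(M)=p_1(W)$, the genus $\widehat{{\rm Ell}}(M,W,\tau,z)$ satisfies the $SL(2,Z)$ transformation law with weight $d$ and factor ${\rm exp}(\pi\sqrt{-1}l\frac{cz^2}{c\tau+d_0})$. This matches Proposition 3.4 with $k=d$ and $m=\frac{l}{2}$, so ${\rm exp}(-8\pi^2 mG_2z^2)={\rm exp}(-4\pi^2lG_2(\tau)z^2)$. Proposition 3.4 then gives that each $\widehat{a_n}(M,W,\tau)$ is a modular form of weight $d+n$. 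This part is immediate.

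For the coefficients I would write
\[
{\rm exp}(-4\pi^2lG_2(\tau)z^2)=\widehat{A}_0(z)+\widehat{A}_1(z)q+O(q^2),\qquad \widehat{{\rm Ell}}(M,W,\tau,z)=\widehat{B}_0(z)+\widehat{B}_1(z)q+\widehat{B}_2(z)q^2+O(q^3).
\]
Since $G_2=-\frac{1}{24}+q+\cdots$, we get $\widehat{A}_0(z)={\rm exp}(\frac{l\pi^2z^2}{6})=1+\frac{l}{6}\pi^2z^2+\frac{l^2}{72}\pi^4z^4+\cdots$ and $\widehat{A}_1(z)=-4l\pi^2z^2+O(z^4)$.

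For the $\widehat{B}_j$ I would expand the defining bundles in the Definition directly. The $Q_1$ part carries $\triangle(W)$ and has its first nontrivial $q$-term $yW^{1,0}+y^{-1}W^{0,1}-2l$. The $Q_2$ and $Q_3$ parts contribute at order $q^{1/2}$ with opposite signs, so in the sum ${\rm ch}(Q_2)+{\rm ch}(Q_3)$ the half-integer powers cancel. The integer powers survive doubled, which produces the factor $2^{l+1}$ and terms such as $\wedge^2$ of $yW^{1,0}+y^{-1}W^{0,1}-2l$ at order $q$. Setting $y=1$ gives $\widehat{a_0}=\widehat{{\rm Ell}}(M,W,\tau,0)$, whose $q^0,q^1,q^2$ coefficients are the stated $\widehat{a_0^0},\widehat{a_0^1},\widehat{a_0^2}$, built from $\widetilde{T_CM}$, $S^2\widetilde{T_CM}$, $\wedge^2\widetilde{W\otimes C}$, and so on.

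The constant term $\widehat{B}_0$ is independent of $z$. Therefore $\widehat{a_n^0}$ is the $z^n$ coefficient of $\widehat{A}_0(z)\widehat{B}_0$. This gives $\widehat{a_1^0}=\widehat{a_3^0}=0$ and makes $\widehat{a_2^0}$ and $\widehat{a_4^0}$ multiples of $\widehat{a_0^0}$. For $\widehat{a_1^1}$, the term $\widehat{A}_1$ only contains even powers of $z$, so $\widehat{a_1^1}$ equals the $z$-linear coefficient of $\widehat{B}_1(z)$. Differentiating $y=e^{2\pi\sqrt{-1}z}$ gives the factor $2\pi\sqrt{-1}$ and turns $y^{k}E$ into $kE$. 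Applied to the $\triangle(W)$ part, this produces ${\rm ch}(W^{1,0}-W^{0,1})$. Applied to the $2^{l+1}$ part, it produces the $\wedge^2W^{1,0}$, $\wedge^2W^{0,1}$ and $l W^{1,0}$ cross terms coming from $\wedge^2(yW^{1,0}+y^{-1}W^{0,1}-2l)$.

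I expect the main obstacle to be this order-$q$ and order-$q^2$ bookkeeping in the $Q_2+Q_3$ part. The half-integer cancellation and the $\wedge^2$, $\wedge^4$ and cross terms must be tracked carefully to match the stated bundles exactly. I would also recheck the numerical constant in $\widehat{a_4^0}$, which should be $\frac{l^2}{72}$ from the exponential but is printed as $\frac{l^2}{12}$, against a direct computation.
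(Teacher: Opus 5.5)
\textbf{Same route as the paper, but with a gap that the paper shares.} You follow the paper's proof essentially line by line: Proposition 3.4 with $k=d$, $m=\frac{l}{2}$, then the product $(\widehat A_0+\widehat A_1q)(\widehat B_0+\widehat B_1q)$ with $\widehat B_0$ constant in $z$. Two of your corrections are right. The $z^4$-coefficient of $\widehat A_0$ is $\frac{l^2}{72}$, and the printed $\frac{l^2}{12}$ is a slip. In $\widehat{a_1^1}$ the printed ``$+2lW^{1,0}-2lW^{1,0}$'' cancels itself, and your computation correctly gives $-2lW^{1,0}+2lW^{0,1}$.

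The gap is the step ``$\widehat B_0$ is independent of $z$''. It holds only if Definition 3.10 is read literally, with an untwisted $\triangle(W)$ in $Q_1$, so that ${\rm ch}(Q_1)$ starts with $\prod_\alpha 2\cos(\pi w_\alpha)$. The modularity you import comes from Theorem 3.12, which is derived from the theta-function formula of Lemma 3.11. There the $\theta_1$-term starts with $2^l\prod_\alpha\cos(\pi(w_\alpha+z))$, because $\theta_1(v,\tau)=2q^{1/8}\cos(\pi v)\prod_j(\cdots)$. The two expressions differ by $\prod_\alpha\cos(\pi(w_\alpha+z))/\cos(\pi w_\alpha)$. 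So the two halves of your argument concern different functions, and only the theta-function one satisfies the $SL(2,{\bf Z})$ law.

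\textbf{What changes for the modular function.} For it,
\[
\widehat B_0(z)=\int_M\widehat A(TM)\prod_{\alpha=1}^l 2\cos(\pi(w_\alpha+z))+2^{l+1}\int_M\widehat A(TM),
\]
so $\widehat{a_2^0}=\frac{l}{6}\pi^2\widehat{a_0^0}+[z^2]\widehat B_0(z)$. Likewise $\widehat{a_4^0}$ acquires extra terms, and for odd $d$ so do $\widehat{a_1^0}$, $\widehat{a_3^0}$ and $\widehat{a_1^1}$. Two checks confirm this.
\begin{enumerate}
\item Take $M=\mathrm{pt}$ ($d=0$). Then $\widehat{a_2}$ is a holomorphic weight-$2$ form for $SL(2,{\bf Z})$, hence $0$. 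The theta version gives $\widehat B_0(z)=2^l(\cos^l(\pi z)+2)$ and $\widehat{a_2^0}=2^l\bigl(3\cdot\frac{l\pi^2}{6}-\frac{l\pi^2}{2}\bigr)=0$. Your formula gives $\frac{l}{2}\pi^2 2^l\neq 0$, so the literal reading is not modular.
\item Take $M=K3$, $W=TM$ ($l=2$). One gets $\widehat B_0(z)=4\cos(2\pi z)-4$, hence $\widehat{a_2^0}=-8\pi^2$, not $\frac{l}{6}\pi^2\widehat{a_0^0}=0$.
\end{enumerate}

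\textbf{What your argument does establish.} It proves the modularity and the values of $\widehat{a_0^0}$, $\widehat{a_0^1}$, $\widehat{a_0^2}$, since these only use $z=0$, where both readings agree. It also gives $\widehat{a_1^0}=\widehat{a_3^0}=0$ for even $d$. Indeed, $\widehat A(TM)$ lives in degrees divisible by $4$, so $\int_M\widehat A(TM)P(-w)=(-1)^d\int_M\widehat A(TM)P(w)$, which yields $\widehat B_0(-z)=(-1)^d\widehat B_0(z)$.

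For the remaining coefficients, give the spinor factor its $y$-grading, ${\rm ch}=\prod_\alpha\bigl(y^{1/2}e^{\pi\sqrt{-1}w_\alpha}+y^{-1/2}e^{-\pi\sqrt{-1}w_\alpha}\bigr)$, so that the Definition and the Lemma agree, and redo the $z$-expansion. The printed formulas for those coefficients do not survive in general.
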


\begin{proof} We know that $\widehat{{\rm Ell}}(M,W,0,\tau)=\widehat{a_0}(M,W,\tau)$ and
\begin{equation}
\widehat{{\rm Ell}}(M,W,0,\tau)=\int_M \widehat{A}(M){\rm ch}(\bigotimes _{n=1}^{\infty}S_{q^n}(\widetilde{TM\otimes C}))
{\rm ch}(Q_1+2^l Q_2+2^lQ_3)|_{y=1}.
\end{equation}
So we get (3.34). If we set
\begin{equation}{\rm exp}(-4\pi^2lG_2(\tau)z^2):=\widehat{A_0(Z)}+\widehat{A_1(z)}q+O(q^2),
\end{equation}
and
\begin{equation}\widehat{{\rm Ell}}(TM,W,\tau,z):=\widehat{B_0(Z)}+\widehat{B_1(z)}q+O(q^2),
\end{equation}
we can get that
\begin{align}
&\widehat{A_0(z)}=1+\frac{l}{6}\pi^2z^2+\frac{l^2}{12}\pi^4z^4+O(z^6),\\\notag
&\widehat{A_1(z)}=-4l\pi^2z^2-\frac{2}{3}\pi^4l^2z^4+O(z^6),\\\notag
&\widehat{B_0(z)}=\int_M\widehat{A}(TM){\rm ch}(\triangle(W))+2^{l+1}\int_M\widehat{A}(TM),\\\notag
&\widehat{B_1(z)}=\int_M\widehat{A}(TM){\rm ch}(\triangle(W)\otimes\widetilde{T_CM})+2^{l+1}\int_M\widehat{A}(TM){\rm ch}(\widetilde{T_CM})\\\notag
&+\int_M\widehat{A}(TM){\rm ch}(\triangle(W)\otimes(yW^{1,0}+y^{-1}W^{0,1}-2l)\\\notag
&+2^{l+1}\int_M\widehat{A}(TM){\rm ch}(
y^{-2}\wedge^2W^{0,1}+W^{1,0}\otimes W^{0,1}+y^2\wedge^2W^{1,0}-2ly^{-1}W^{0,1}-2lyW^{1,0}).
\end{align}
We know that
\begin{equation}
\sum_{n\geq 0}\widehat{a_n}(TM,W,\tau)\cdot z^n=\widehat{A}_0(z)\widehat{B}_0(z)+[\widehat{A}_0(z)\widehat{B}_1(z)+\widehat{A}_1(z)\widehat{B}_0(z)]q+\cdots,
\end{equation}
then we can get Proposition 3.13 by (3.38)-(3.41).
\end{proof}

Similar to Theorem 3.6 in \cite{Wa1}, we have

\begin{thm}Let $c_1(W^{1,0})=0$ and the first Pontrjagin classes $p_1(M)=p_1(W)$, then\\
1)if $d=4$, then
\begin{align}
&\int_M\widehat{A}(TM){\rm ch}(\triangle(W)){\rm ch}(\widetilde{T_CM}+\widetilde{W\otimes C})+2^{l+1}
\int_M\widehat{A}(TM){\rm ch}(\widetilde{T_CM}+\wedge^2\widetilde{W\otimes C})\\\notag
&=240[\int_M\widehat{A}(TM){\rm ch}(\triangle(W))+2^{l+1}\int_M\widehat{A}(TM)]
\end{align}
and
so $\widehat{a_0^1}$ is the integer multiple of $240$.
\\
2)if $d=6$, then
\begin{align}
&\int_M\widehat{A}(TM){\rm ch}(\triangle(W)){\rm ch}(\widetilde{T_CM}+\widetilde{W\otimes C})+2^{l+1}
\int_M\widehat{A}(TM){\rm ch}(\widetilde{T_CM}+\wedge^2\widetilde{W\otimes C})\\\notag
&=-504[\int_M\widehat{A}(TM){\rm ch}(\triangle(W))+2^{l+1}\int_M\widehat{A}(TM)]
\end{align}
and
so $\widehat{a_0^1}$ is the integer multiple of $504$.
\\

3)if $d=8$, then
\begin{align}
&\int_M\widehat{A}(TM){\rm ch}(\triangle(W)){\rm ch}(\widetilde{T_CM}+\widetilde{W\otimes C})+2^{l+1}
\int_M\widehat{A}(TM){\rm ch}(\widetilde{T_CM}+\wedge^2\widetilde{W\otimes C})\\\notag
&=480[\int_M\widehat{A}(TM){\rm ch}(\triangle(W))+2^{l+1}\int_M\widehat{A}(TM)]
\end{align}
and
so $\widehat{a_0^1}$ is the integer multiple of $480$.
\\

4)if $d=10$, then
\begin{align}
&\int_M\widehat{A}(TM){\rm ch}(\triangle(W)){\rm ch}(\widetilde{T_CM}+\widetilde{W\otimes C})+2^{l+1}
\int_M\widehat{A}(TM){\rm ch}(\widetilde{T_CM}+\wedge^2\widetilde{W\otimes C})\\\notag
&=-264[\int_M\widehat{A}(TM){\rm ch}(\triangle(W))+2^{l+1}\int_M\widehat{A}(TM)]
\end{align}
and
so $\widehat{a_0^1}$ is the integer multiple of $264$.
\\
5)if $d=14$, then
\begin{align}
&\int_M\widehat{A}(TM){\rm ch}(\triangle(W)){\rm ch}(\widetilde{T_CM}+\widetilde{W\otimes C})+2^{l+1}
\int_M\widehat{A}(TM){\rm ch}(\widetilde{T_CM}+\wedge^2\widetilde{W\otimes C})\\\notag
&=-24[\int_M\widehat{A}(TM){\rm ch}(\triangle(W))+2^{l+1}\int_M\widehat{A}(TM)]
\end{align}
and
so $\widehat{a_0^1}$ is the integer multiple of $24$.
\end{thm}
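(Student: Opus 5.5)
The plan is to use the fact from Proposition 3.13 that, under $c_1(W^{1,0})=0$ and $p_1(M)=p_1(W)$, the coefficient $\widehat{a_0}(M,W,\tau)=\widehat{{\rm Ell}}(M,W,\tau,0)$ is a modular form of weight $d$ over $SL(2,{\bf Z})$. Setting $z=0$ kills the factor ${\rm exp}(-4\pi^2lG_2(\tau)z^2)$, so $\widehat{a_0}$ is literally the $z$-constant term of the genus.

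For $d=4,6,8,10,14$ the space of modular forms of weight $d$ over $SL(2,{\bf Z})$ is one-dimensional. It is spanned by $E_4$, $E_6$, $E_4^2=E_8$, $E_4E_6=E_{10}$ and $E_4^2E_6=E_{14}$ respectively, whose $q$-expansions are
\begin{align}
&E_4=1+240q+\cdots,\quad E_6=1-504q+\cdots,\quad E_4^2=1+480q+\cdots,\\\notag
&E_4E_6=1-264q+\cdots,\quad E_4^2E_6=1-24q+\cdots.
\end{align}
These follow from (2.27) by multiplying out first-order terms: for instance $480=2\cdot240$, $-264=240-504$ and $-24=480-504$. Hence in each case $\widehat{a_0}(\tau)=\widehat{a_0^0}\,E_d(\tau)$, where $E_d$ denotes the relevant generator. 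Comparing the $q^1$ coefficients gives $\widehat{a_0^1}=c_d\,\widehat{a_0^0}$ with $c_d=240,-504,480,-264,-24$. Substituting the explicit expressions for $\widehat{a_0^0}$ and $\widehat{a_0^1}$ from Proposition 3.13 yields exactly the five displayed identities.

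Integrality of the multiple follows from the fact that $\widehat{a_0^0}=\int_M\widehat{A}(TM){\rm ch}(\triangle(W))+2^{l+1}\int_M\widehat{A}(TM)$ is an integer. By the Atiyah--Singer index theorem, $\int_M\widehat{A}(TM){\rm ch}(\triangle(W))$ is the index of the Dirac operator twisted by the spinor bundle $\triangle(W)$, which is a genuine bundle because $W$ is spin. Likewise $\int_M\widehat{A}(TM)$ is the index of the untwisted Dirac operator, since $M$ is spin. Therefore $\widehat{a_0^1}=c_d\cdot(\text{integer})$.

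The only real checking is in the $q^1$ coefficient of $\widehat{a_0}$. One has to confirm that the expansion of $\widehat{{\rm Ell}}(M,W,\tau,0)$ at order $q$ really gives the bundle combination stated in Proposition 3.13. This uses the $\triangle(W)$ term from $Q_1$ together with the $2^l$-weighted terms from $Q_2$ and $Q_3$, where the half-integral powers $q^{1/2}$ must cancel between the $\theta_2$ and $\theta_3$ pieces. I take this from Proposition 3.13 as given, so the proof reduces to the dimension count of the spaces of modular forms and the comparison of leading coefficients.
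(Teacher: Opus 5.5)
Your proposal is correct and takes the same approach the paper intends; the paper gives no proof here and defers to the analogous theorem in the author's earlier work. That approach is: $\widehat{a_0}(\tau)=\widehat{{\rm Ell}}(M,W,\tau,0)$ is a weight-$d$ modular form over $SL(2,{\bf Z})$, hence equal to $\widehat{a_0^0}$ times $E_4,\,E_6,\,E_4^2,\,E_4E_6,\,E_4^2E_6$ for $d=4,6,8,10,14$; comparing $q^1$ coefficients gives the identities, and integrality follows because $\widehat{a_0^0}$ is a sum of Dirac indices. Your description of the $q$-expansion also agrees with a direct computation at $y=1$: the $q^{1/2}$ terms from $Q_2$ and $Q_3$ cancel, and the $q^1$ coefficient is exactly the one in Proposition 3.13.
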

Similar to Theorem 3.7, we have
\begin{thm}Let $c_1(W^{1,0})=0$ and the first Pontrjagin classes $p_1(M)=p_1(W)$, then\\
when $d$ is even or $d=3,~5,~7,~9,~13$, then
\begin{align}
&\widehat{a^1_1}=2\pi\sqrt{-1}[\int_M\widehat{A}(TM){\rm ch}(\triangle(W)){\rm ch}(W^{1,0}-W^{0,1})\\\notag
&+2^{l+1}\int_M\widehat{A}(TM){\rm ch}(
2\wedge^2W^{1,0}+2lW^{1,0}-2l W^{1,0}
-2\wedge^2W^{0,1})]=0.
\end{align}
\end{thm}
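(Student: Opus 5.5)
The plan is to mirror the proof of Theorem 3.7, with Proposition 3.13 in place of Proposition 3.5. The hypotheses $c_1(W^{1,0})=0$ and $p_1(M)=p_1(W)$ give, by Theorem 3.11, that $\widehat{{\rm Ell}}(M,W,\tau,z)$ satisfies the $SL(2,Z)$ transformation law with weight $d$ and index $\frac{l}{2}$. Proposition 3.4, applied with $m=\frac{l}{2}$, then shows that each coefficient $\widehat{a_n}(M,W,\tau)$ of ${\rm exp}(-4\pi^2lG_2(\tau)z^2)\widehat{{\rm Ell}}(M,W,\tau,z)$ is a modular form of weight $d+n$ over $SL(2,Z)$. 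This is the statement of Proposition 3.13. By Proposition 3.13, the displayed quantity is exactly $\widehat{a_1^1}$, the $q^1$-coefficient of $\widehat{a_1}(M,W,\tau)$. So it suffices to show $\widehat{a_1}=0$ (or at least that its $q^1$-coefficient vanishes).

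\emph{Case $d$ even.} Here $\widehat{a_1}$ is a modular form of odd weight $d+1$ over $SL(2,Z)$. Since $-I\in SL(2,Z)$ acts trivially on $\mathbf{H}$, the transformation law gives $\widehat{a_1}(\tau)=(-1)^{d+1}\widehat{a_1}(\tau)=-\widehat{a_1}(\tau)$. Hence $\widehat{a_1}\equiv 0$, and in particular $\widehat{a_1^1}=0$.

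\emph{Case $d=3,5,7,9,13$.} The weight $d+1$ is then $4,6,8,10,14$. For each of these weights the space of modular forms over $SL(2,Z)$ is one-dimensional, spanned by $E_4,E_6,E_4^2,E_4E_6,E_4^2E_6$ respectively. Therefore $\widehat{a_1}=\widehat{a_1^0}\cdot E_{d+1}'$, where $E'_{d+1}$ is the normalized generator with constant term $1$. Proposition 3.13 gives $\widehat{a_1^0}=0$, so $\widehat{a_1}=0$ and hence $\widehat{a_1^1}=0$. Equivalently, $\widehat{a_1}=O(q)$, and Lemma 3.8 applied in weight $d+1\in\{4,6,8,10,14\}$ gives $\widehat{a_1}=0$.

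The main point requiring care is the justification that $\widehat{a_1^0}=0$, i.e.\ that $\widehat{a_1}$ has no constant term. The $q^0$ part of $\widehat{{\rm Ell}}$ is $\widehat{B_0}$, which is independent of $z$, and $\widehat{A_0}(z)$ is even in $z$, so the $z^1$-coefficient of the product vanishes. This is already recorded in Proposition 3.13, so the argument reduces to citing it. If one wants to use Theorem 3.11 literally in the odd-$l$ case, one notes that Proposition 3.4 only uses the $SL(2,Z)$ law, not integrality of the index.
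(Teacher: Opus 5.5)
Your proposal is the paper's own argument, namely the proof of Theorem 3.7 transplanted to $\widehat{a_1}$. Odd weight forces $\widehat{a_1}=0$ when $d$ is even. For $d=3,5,7,9,13$ one combines $\dim M_{d+1}(SL(2,{\bf Z}))=1$ with $\widehat{a_1^0}=0$.

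The even-$d$ half is fine. There the displayed integrand is odd in the $w_\alpha$, so the integral vanishes for degree reasons alone.

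The odd-$d$ half breaks at exactly the step you singled out, and the paper has the same gap. The $z$-independence of the $q^0$-term holds for the $K$-theoretic definition (Definition 3.10), where $\triangle(W)$ carries no power of $y$. But the transformation law (Theorem 3.12), which is what makes $\widehat{a_1}$ modular, is derived from the theta-function formula of Lemma 3.11, and the two expressions are not equal. Since $\theta_1(v,\tau)/\theta_1(0,\tau)=\cos(\pi v)+O(q)$, the $q^0$-term of the theta formula is $\int_M\widehat{A}(TM)\prod_{\alpha}2\cos(\pi(w_\alpha+z))+2^{l+1}\int_M\widehat{A}(TM)$. This depends on $z$: it corresponds to the $y$-graded spinor bundle $\sum_k y^{k-l/2}\wedge^kW^{1,0}\otimes(\det W^{1,0})^{-1/2}$, not to $\triangle(W)$. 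For this genuinely modular function, using $c_1(W^{1,0})=0$, one gets $\widehat{a_1^0}=2\pi\sqrt{-1}\sum_k(k-\frac{l}{2})\int_M\widehat{A}(TM){\rm ch}(\wedge^kW^{1,0})$. This is odd in the $w_\alpha$ and need not vanish when $d$ is odd. With the ungraded $\triangle(W)$ you keep $\widehat{a_1^0}=0$ but lose modularity. No single function supplies both inputs of your odd-$d$ step.

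In fact the odd-$d$ claim is false. Take $M=S^6$ and $W=TS^6\oplus{\bf C}$ with the standard almost complex structure, so $d=3$ and $l=4$. The hypotheses hold because $H^2(S^6)=H^4(S^6)=0$. With the induced orientation, $\int_M{\rm ch}_3(W^{1,0})=\frac{1}{2}\chi(S^6)=1$. In cohomology we have ${\rm ch}(\triangle(W))=16$, $\widehat{A}(TM)=1$, ${\rm ch}_3(\wedge^2W^{1,0})=0$ and ${\rm ch}_3(W^{0,1})=-{\rm ch}_3(W^{1,0})$. Hence the bracket in the statement equals $32-512=-480$ if the middle terms are read as $-2lW^{1,0}+2lW^{0,1}$, which is what the $z$-derivative of the $q^1$-coefficient actually produces. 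Read literally, it equals $32$. Neither vanishes.

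This matches the theta version. On $S^6$ the extra $y$-grading terms are multiplied by ${\rm ch}(\widetilde{T_CM}+\widetilde{W\otimes C})=0$, and one finds $\widehat{a_1^0}=-4\pi\sqrt{-1}$ and $\widehat{a_1^1}=240\,\widehat{a_1^0}=-960\pi\sqrt{-1}$, exactly as $\widehat{a_1}=\widehat{a_1^0}E_4$ requires. What the one-dimensionality argument legitimately gives for $d=3,5,7,9,13$ is a proportionality $\widehat{a_1^1}=c\,\widehat{a_1^0}$, with $c=240,-504,480,-264,-24$ respectively, not vanishing.
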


Similar to Prop. 3.9, we have:
\begin{prop}1)Let $c_1(W^{1,0})=0$ and the first Pontrjagin classes $p_1(M)=p_1(W)$.
1)When $2<d<12$ or $d=14$, then $\widehat{a_0^0}=\int_M\widehat{A}(TM){\rm ch}(\triangle(W))+2^{l+1}\int_M\widehat{A}(TM)=0$ if and only if $\widehat{a_0}=0$.\\
2)When $1<d<11$ or $d=13$, then $\widehat{a_1}=0$.
\end{prop}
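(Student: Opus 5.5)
The plan is to mirror the proof of Prop. 3.9, with Prop. 3.13 in place of Prop. 3.5. Under the hypotheses $c_1(W^{1,0})=0$ and $p_1(M)=p_1(W)$, Prop. 3.13 tells us that each $\widehat{a_n}(M,W,\tau)$ is a modular form of weight $d+n$ over $SL(2,Z)$. It also gives the leading Fourier coefficients
\[
\widehat{a_0^0}=\int_M\widehat{A}(TM){\rm ch}(\triangle(W))+2^{l+1}\int_M\widehat{A}(TM),\qquad \widehat{a_1^0}=0 .
\]
Everything then follows from Lemma 3.8 (\cite{BG} Lemma 6.1) applied to $\widehat{a_0}$ and $\widehat{a_1}$.

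For part 1), the direction $\widehat{a_0}=0\Rightarrow \widehat{a_0^0}=0$ is trivial, since $\widehat{a_0^0}$ is the constant term of $\widehat{a_0}$. For the converse, suppose $\widehat{a_0^0}=0$. Then $\widehat{a_0}=\widehat{a_0^1}q+\cdots=O(q)$, i.e. ${\rm ord}_\infty(\widehat{a_0})\geq1$. Now $\widehat{a_0}$ has weight $d$ with $2<d<12$ or $d=14$, so Lemma 3.8 applies directly and yields $\widehat{a_0}=0$.

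For part 2), note that $\widehat{a_1}$ has weight $d+1$. The condition $1<d<11$ is equivalent to $2<d+1<12$, and $d=13$ is equivalent to $d+1=14$. Since $\widehat{a_1^0}=0$ by Prop. 3.13, we have $\widehat{a_1}=O(q)$, and Lemma 3.8 gives $\widehat{a_1}=0$ in these ranges.

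The only step needing care is the claim $\widehat{a_1^0}=0$, which is the vanishing of the $z^1q^0$ coefficient. I would check it directly. First, $\widehat{A_0}(z)$ is even in $z$. Second, $\widehat{B_0}(z)$ is independent of $z$: at $q^0$ the $\theta_2,\theta_3$ products contribute $1$, and the $\theta_1$ product contributes ${\rm ch}(\triangle(W))$ with no $y$-dependence. Hence the product $\widehat{A_0}\widehat{B_0}$ has no odd powers of $z$ at order $q^0$. This is already recorded in Prop. 3.13, so it may simply be cited.
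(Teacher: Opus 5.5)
Your route is the paper's own: the statement is proved ``similar to Prop.~3.9'', i.e.\ by Lemma~3.8 applied to $\widehat{a_0}$ and $\widehat{a_1}$, with modularity and leading coefficients taken from Prop.~3.13. Part 1) is fine, because nothing is ambiguous at $z=0$: $\widehat{a_0}(\tau)=\widehat{{\rm Ell}}(M,W,\tau,0)$ is modular of weight $d$ with constant term $\widehat{a_0^0}$.

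The problem is the step you single out, $\widehat{a_1^0}=0$, and your direct check of it is wrong. By the product formula (2.7), $\theta_1(w_\alpha+z,\tau)/\theta_1(0,\tau)=\cos(\pi(w_\alpha+z))+O(q)$. So the $q^0$-part of the $\theta_1$-summand of Lemma~3.11 is $\int_M\widehat{A}(TM)\prod_\alpha 2\cos(\pi(w_\alpha+z))=\int_M\widehat{A}(TM){\rm ch}\big(y^{-l/2}\wedge_y(W^{1,0})\otimes(\det W^{1,0})^{-1/2}\big)$. This depends on $z$, and it equals $\int_M\widehat{A}(TM){\rm ch}(\triangle(W))$ only at $z=0$. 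The $z$-independence you invoke holds for the $K$-theoretic formula of Definition~3.10, where $\triangle(W)$ carries no power of $y$. But that formula does not agree with Lemma~3.11: the first summands differ by the factor $\prod_\alpha\cos(\pi w_\alpha)/\cos(\pi(w_\alpha+z))$. Moreover, it is the theta-function expression to which the transformation law of Theorem~3.12, and hence the modularity in Prop.~3.13, applies. So ``$\widehat{a_1}$ is modular of weight $d+1$'' and ``$\widehat{a_1^0}=0$'' are facts about two different functions. Citing Prop.~3.13 does not repair this, since its value $\widehat{a_1^0}=0$ rests on the same $z$-independent $\widehat{B_0}$; the paper's one-line proof inherits the identical flaw.

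The gap is not cosmetic. For even $d$, part 2) holds trivially: the weight $d+1$ is odd, or equivalently $\widehat{{\rm Ell}}$ is even in $z$. For odd $d$, the only case with content, the modular function has $\widehat{a_1^0}=2\pi\sqrt{-1}\sum_{k}(k-\frac{l}{2})\int_M\widehat{A}(TM){\rm ch}\big(\wedge^kW^{1,0}\otimes(\det W^{1,0})^{-1/2}\big)$, which need not vanish. Take $M$ a simply connected Calabi--Yau threefold and $W=TM$, so that $d=l=3$ and both hypotheses hold. Since $\widehat{A}(TM)={\rm Td}(T^{1,0}M)$ and $\wedge^kT^{1,0}M\cong\Omega^{3-k}_M$, Riemann--Roch gives $\widehat{a_1^0}=2\pi\sqrt{-1}(h^{2,1}-h^{1,1})=-\pi\sqrt{-1}\,\chi(M)$. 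For the quintic this yields $\widehat{a_1}=200\pi\sqrt{-1}\,E_4(\tau)\neq0$. Hence part 2) for $d=3,5,7,9,13$ cannot be obtained from Lemma~3.8 at all. There is no vanishing argument for $\widehat{a_1^0}$ to supply, because it is generally nonzero once Definition~3.10 and Lemma~3.11 are made consistent.
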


\section{Two-variable elliptic genera which are $\Gamma_0(2)$, $\Gamma^0(2)$, $\Gamma _{\theta}$-Jacobi forms}

\indent In this section, the fundamental setting is the same as the section 3.
\begin{defn}
The two-variable elliptic genera of $M^{2d}$ with respect to $W$, which we denote by ${\rm Ell}_j(M,W,\tau,z)$ for $1\leq j\leq 3$ are defined by
\begin{align}
&{\rm Ell}_1(M,W,\tau,z):=\int_M \widehat{A}(M){\rm ch}(\bigotimes _{n=1}^{\infty}S_{q^n}(\widetilde{TM\otimes C}))
{\rm ch}(Q_1),\\\notag
&{\rm Ell}_j(M,W,\tau,z):=2^l\int_M \widehat{A}(M){\rm ch}(\bigotimes _{n=1}^{\infty}S_{q^n}(\widetilde{TM\otimes C}))
{\rm ch}(Q_j),~{\rm for}~j=2,~3,
\end{align}
which is the index of the twisted Dirac operator.
\end{defn}
By (2.4)-(2.9), we have

\begin{lem} The following equality holds:
\begin{align}
{\rm Ell}_j(M,W,\tau,z)=2^l\int_M\prod_{i=1}^d\frac{x_i\theta'(0,\tau)}{\theta(x_i,\tau)}\prod_{\alpha=1}^l\frac{\theta_j
(\tau,w_\alpha+z)}{\theta_j(0,\tau)},~1\leq j\leq 3.
\end{align}
\end{lem}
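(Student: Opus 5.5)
The plan is to compute the Chern character of each factor in the definitions of ${\rm Ell}_j$ in terms of formal Chern roots, using (2.4)--(2.5). Then I will recognize the resulting infinite products as the theta-function product expansions (2.6)--(2.9), exactly as in the proof of Lemma 3.2, applied one factor at a time.

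\emph{Tangential part.} First, I will treat the factor coming from the tangent bundle. The splitting principle gives
\[
\widehat{A}(TM)=\prod_{i=1}^d\frac{\pi\sqrt{-1}x_i}{\sinh(\pi\sqrt{-1}x_i)}=\prod_{i=1}^d\frac{\pi x_i}{\sin(\pi x_i)},
\]
after choosing the normalization of the roots $\pm2\pi\sqrt{-1}x_i$. Next, by (2.5),
\[
{\rm ch}\Big(\bigotimes_n S_{q^n}(\widetilde{TM\otimes C})\Big)=\prod_{n}\prod_i\frac{(1-q^n)^2}{(1-e^{2\pi\sqrt{-1}x_i}q^n)(1-e^{-2\pi\sqrt{-1}x_i}q^n)}.
\]
Multiplying these two expressions and comparing with (2.6) and the Jacobi identity (2.11) gives $\prod_i x_i\theta'(0,\tau)/\theta(x_i,\tau)$. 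Here I use $\theta'(0,\tau)=2\pi q^{1/8}\prod(1-q^j)^3$.

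\emph{Case $j=1$.} For ${\rm Ell}_1$, I will use ${\rm ch}(\triangle(W))=\prod_\alpha(e^{\pi\sqrt{-1}w_\alpha}+e^{-\pi\sqrt{-1}w_\alpha})=2^l\prod_\alpha\cos(\pi w_\alpha)$. Since $W$ is spin with the complex structure $J$, $\triangle(W)\cong\wedge^*W^{1,0}\otimes(\det W^{1,0})^{-1/2}$, whose character gives this cosine product. Twisting by $y$ turns the $\wedge_{yq^m}$ factors into $\prod(1+e^{2\pi\sqrt{-1}(w_\alpha+z)}q^m)(1+e^{-2\pi\sqrt{-1}(w_\alpha+z)}q^m)$. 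The factor $\wedge_{q^m}(-C^{2l})$ contributes $(1+q^m)^{-2l}$.

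\emph{Main obstacle.} I expect the main difficulty to be matching the $y$-dependence of the $\triangle(W)$ factor. To get $\cos(\pi(w_\alpha+z))$ rather than $\cos(\pi w_\alpha)$, the spinor factor must carry the $y$-twist as well. I would therefore interpret $\triangle(W)$ in $Q_1$ as $y$-graded, i.e.\ $\triangle(W)=\wedge_{y}W^{1,0}$ suitably normalized by $y^{-l/2}\det^{-1/2}$, as the formula in Lemma 3.2 implicitly requires. With that reading, I will divide by the value at $w=z=0$, namely $2^l\prod(1+q^m)^{2l}$. This identifies ${\rm ch}(Q_1)$ with $2^l\prod_\alpha\theta_1(w_\alpha+z,\tau)/\theta_1(0,\tau)$; the factor $2^l$ comes from $\cos 0$.

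\emph{Cases $j=2,3$.} Here $\wedge_{\mp yq^{m-1/2}}$ gives $\prod(1\mp e^{\pm2\pi\sqrt{-1}(w_\alpha+z)}q^{m-1/2})$. The normalizing factor $\wedge_{\mp q^{m-1/2}}(-C^{2l})$ equals $\prod(1\mp q^{m-1/2})^{-2l}=\prod_\alpha 1/(\theta_j(0,\tau)/\prod(1-q^m))$. The $(1-q^m)$ factors cancel in the ratio $\theta_j(w_\alpha+z,\tau)/\theta_j(0,\tau)$. The prefactor $2^l$ is the one built into the definition, and combining with the tangential part finishes the proof.
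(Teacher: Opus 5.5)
Your computation is the paper's own argument (the paper's proof only cites (2.4)--(2.9)), and for $j=2,3$ it is complete and correct. The tangential factor gives $\prod_i x_i\theta'(0,\tau)/\theta(x_i,\tau)$, ${\rm ch}(Q_j)=\prod_\alpha\theta_j(w_\alpha+z,\tau)/\theta_j(0,\tau)$, and the $2^l$ is the prefactor in Definition 4.1.

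For $j=1$, the obstacle you flag is real. It is a defect of the statement, not of your argument. With $Q_1$ as actually defined, ${\rm ch}(\triangle(W))=\prod_\alpha 2\cos(\pi w_\alpha)$ contains no $y$. So what (2.4)--(2.9) actually give is
\[
{\rm Ell}_1(M,W,\tau,z)=2^l\int_M\prod_{i=1}^d\frac{x_i\theta'(0,\tau)}{\theta(x_i,\tau)}\prod_{\alpha=1}^l\frac{\cos(\pi w_\alpha)}{\cos(\pi(w_\alpha+z))}\,\frac{\theta_1(w_\alpha+z,\tau)}{\theta_1(0,\tau)}.
\]
This genuinely differs from the claimed formula. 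Take trivial $W$ (all $w_\alpha=0$) over a K3 surface. The $q^0$ coefficients are $2^{l+1}$ on the left and $2^{l+1}\cos^l(\pi z)$ on the right. For odd $l$ periodicity also shows it: the literal ${\rm Ell}_1$ involves only integral powers of $y$, so it is invariant under $z\mapsto z+1$, while the right side changes sign. Hence no argument can prove the $j=1$ case from the definitions as written, and the same applies to the $\theta_1$ factor in Lemma 3.2.

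Your replacement of $\triangle(W)$ by $y^{-l/2}\wedge_y(W^{1,0})\otimes(\det W^{1,0})^{-1/2}$ is exactly the right correction. With it your computation gives ${\rm ch}(Q_1)=2^l\prod_\alpha\theta_1(w_\alpha+z,\tau)/\theta_1(0,\tau)$. You should, however, present it as a change of definition, not as the reading the paper "implicitly requires." Two points support this:
\begin{itemize}
\item The paper itself uses the untwisted $\triangle(W)$ elsewhere. In the proof of Proposition 3.5, $B_0(z)=\int_M\widehat{A}(TM){\rm ch}(\triangle(W))$ is taken to be independent of $z$, so those coefficient computations do not survive your correction.
\item For odd $l$ your twisted factor involves $y^{\pm1/2}$. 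That is precisely the source of the sign $(-1)^{l\mu}$ in Theorem 4.3.
\end{itemize}
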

By (2.12)-(2.24), we have
\begin{thm}
If $c_1(W^{1,0})=0$ and the first Pontrjagin classes $p_1(M)=p_1(W)$, then for $1\leq j\leq 3$
\begin{align}
{\rm Ell}_j(M,W,\frac{z}{c\tau+d_0},\frac{a\tau+b}{c\tau+d_0})=(c\tau+d_0)^{d}{\rm exp}(\pi\sqrt{-1}l\frac{cz^2}{c\tau+d_0}){\rm Ell}_j(M,W,z,\tau),
\end{align}
\begin{align}
{\rm Ell}_1(M,W,z+\lambda \tau+\mu,\tau)=(-1)^{l\mu  }{\rm exp}(-\pi\sqrt{-1}l(2\lambda z+\lambda^2\tau))
{\rm Ell}_1(M,W,z,\tau),
\end{align}
\begin{align}
{\rm Ell}_2(M,W,z+\lambda \tau+\mu,\tau)=(-1)^{l\lambda }{\rm exp}(-\pi\sqrt{-1}l(2\lambda z+\lambda^2\tau))
{\rm Ell}_2(M,W,z,\tau),
\end{align}
\begin{align}
{\rm Ell}_3(M,W,z+\lambda \tau+\mu,\tau)={\rm exp}(-\pi\sqrt{-1}l(2\lambda z+\lambda^2\tau))
{\rm Ell}_3(M,W,z,\tau).
\end{align}
When $l$ is even, the elliptic genus ${\rm Ell}_1(M,W,\tau,z),~{\rm Ell}_2(M,W,\tau,z),~{\rm Ell}_3(M,W\tau,z)$ are $\Gamma_0(2)$, $\Gamma^0(2)$, $\Gamma _{\theta}$-Jacobi forms of weight $d$ and index $\frac{l}{2}$ respectively.
\end{thm}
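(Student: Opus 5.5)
Starting from the theta-function representation of ${\rm Ell}_j$ in Lemma 4.2, I would treat the integrand as a top-degree form and check how it transforms under the generators of the relevant group and under lattice translations in $z$. For the modular law I would use the generators already listed in Section 2: $T$ and $ST^2ST$ for $\Gamma_0(2)$, $STS$ and $T^2STS$ for $\Gamma^0(2)$, and $S$ and $T^2$ for $\Gamma_\theta$. Since the factor in question is multiplicative, it is enough to verify (4.3) on these generators.

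\textbf{The $\widehat{A}$-part.} By (2.12) and (2.16), under $\tau\mapsto -1/\tau$ the factor $\frac{x_i\theta'(0,\tau)}{\theta(x_i,\tau)}$ becomes $\frac{x_i\,\tau\,\theta'(0,\tau)}{\theta(\tau x_i,\tau)}e^{-\pi\sqrt{-1}\tau x_i^2}$. Under $\tau\mapsto\tau+1$ it is invariant.

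\textbf{The $W$-part.} By (2.13)--(2.15), $S$ sends $\theta_1\leftrightarrow\theta_2$ and fixes $\theta_3$. In each case it contributes $e^{\pi\sqrt{-1}\tau(w_\alpha+z)^2}$ divided by the value at $0$, that is, $e^{\pi\sqrt{-1}\tau(w_\alpha^2+2w_\alpha z+z^2)}$ with argument $\tau(w_\alpha+z)$. Likewise $T$ sends $\theta_2\leftrightarrow\theta_3$ and fixes $\theta_1$ up to a constant that cancels against the denominator.

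\textbf{Assembling the exponentials.} The total exponential is
\[
e^{\pi\sqrt{-1}\tau\left(\sum_\alpha w_\alpha^2-\sum_i x_i^2\right)}\,e^{2\pi\sqrt{-1}\tau z\sum_\alpha w_\alpha}\,e^{\pi\sqrt{-1}l\tau z^2}.
\]
The middle factor vanishes because $\sum_\alpha w_\alpha$ represents $c_1(W^{1,0})=0$. The first factor is $1$ in cohomology, since $\sum x_i^2-\sum w_\alpha^2$ is proportional to $p_1(M)-p_1(W)=0$; the exponential can be removed after integration because the condition holds at the level of classes. I would then replace $x_i,w_\alpha$ by $\tau x_i,\tau w_\alpha$ via rescaling of degree-$2k$ components. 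Only the $2d$-degree part survives integration, which yields $(c\tau+d_0)^d$ for $S$ (with $c\tau+d_0=\tau$), with $z$ replaced by $z/\tau$ in accordance with the statement's convention. The group statement follows: $\theta_1$ is preserved by $T$ and by $ST^2ST$ (since $S$ maps $\theta_1\to\theta_2$, $T^2$ fixes $\theta_2$, and $S$ maps back), giving $\Gamma_0(2)$. Similarly $\theta_2$ gives $\Gamma^0(2)$ and $\theta_3$ gives $\Gamma_\theta$. For compositions the exponential cocycle composes correctly, since it is the standard Jacobi factor with $m=l/2$.

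\textbf{Elliptic law.} From the product formulas, $\theta_j(v+1,\tau)$ equals $-\theta_1$ for $j=1$ (because of $\cos\pi v$), and is unchanged for $j=2,3$. For the shift $v\mapsto v+\tau$, I would use the standard quasi-periodicity $\theta_j(v+\tau,\tau)=\pm q^{-1/2}e^{-2\pi\sqrt{-1}v}\theta_j(v,\tau)$, with sign $+$ for $\theta_1$ and $\theta_3$ and $-$ for $\theta_2$. This is checked by reindexing the products (for $\theta_1$ one also uses $e^{\pi\sqrt{-1}\tau}$ from the cosine prefactor). Taking the product over $\alpha$ with $v=w_\alpha+z$ and using $\sum w_\alpha=0$ gives the factors $(-1)^{l\mu}$, $(-1)^{l\lambda}$ and $1$, together with $\exp(-\pi\sqrt{-1}l(2\lambda z+\lambda^2\tau))$.

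\textbf{Conclusion and main obstacle.} When $l$ is even these signs are trivial and the index is $l/2$, which gives Definition 2.3. I expect the main obstacle to be the careful bookkeeping of the degree-rescaling argument and of the signs $\sqrt{-1}$ and $e^{\pi\sqrt{-1}/4}$ in the theta transformation laws. This bookkeeping matters especially for $ST^2ST$ and $T^2STS$, where I would verify the composite factor directly rather than trust the cocycle property.
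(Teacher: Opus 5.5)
Your proposal is correct and is essentially the paper's own argument. The paper also starts from the theta-function form in Lemma 4.2 and uses (2.12)--(2.24), together with $c_1(W^{1,0})=0$ and $p_1(M)=p_1(W)$, to obtain the $T$-, $S$- and lattice-shift laws (4.7): $T$ swaps ${\rm Ell}_2$ and ${\rm Ell}_3$, and $S$ swaps ${\rm Ell}_1$ and ${\rm Ell}_2$. It then composes these on the generators of $\Gamma_0(2)$, $\Gamma^0(2)$ and $\Gamma_\theta$. Your write-up makes explicit what the paper leaves implicit: the cohomological cancellation of the anomaly exponentials, the degree rescaling that produces $\tau^d$, and the quasi-periodicity signs, which correctly give $(-1)^{l\mu}$ for ${\rm Ell}_1$ where (4.7) misprints $(-1)^{\mu}$.
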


\begin{proof}
By (2.12)-(2.24), we have
we get ${\rm Ell}_j(M,W,\tau,z)$ satisfies the following transformation laws:
\begin{align}
&{\rm Ell}_1(M,W,\tau+1,z)={\rm Ell}_1(M,W,\tau,z),\\\notag
&{\rm Ell}_2(M,W,\tau+1,z)={\rm Ell}_3(M,W,\tau,z),\\\notag
&{\rm Ell}_3(M,W,\tau+1,z)={\rm Ell}_2(M,W,\tau,z),\\\notag
&{\rm Ell}_1(M,W,\tau,z+\mu)=(-1)^{\mu}{\rm Ell}_1(M,W,V,\tau,z),\\\notag
&{\rm Ell}_2(M,W,\tau,z+\mu)={\rm Ell}_2(M,W,\tau,z),\\\notag
&{\rm Ell}_3(M,W,\tau,z+\mu)={\rm Ell}_3(M,W,\tau,z),\\\notag
&{\rm Ell}_1(M,W,z+\tau,\tau)={\rm exp}(-\pi\sqrt{-1}l(2z+\tau))
{\rm Ell}_1(M,W,z,\tau),\\\notag
&{\rm Ell}_2(M,W,z+\tau,\tau)=(-1)^l{\rm exp}(-\pi\sqrt{-1}l(2z+\tau))
{\rm Ell}_2(M,W,z,\tau),\\\notag
&{\rm Ell}_3(M,W,z+\tau,\tau)={\rm exp}(-\pi\sqrt{-1}l(2z+\tau))
{\rm Ell}_3(M,W,z,\tau),\\\notag
&{\rm Ell}_1(M,W,-\frac{1}{\tau},\frac{z}{\tau})=\tau^{d}{\rm exp}(\pi\sqrt{-1}l\frac{z^2}{\tau})
{\rm Ell}_2(M,W,\tau,z),\\\notag
&{\rm Ell}_2(M,W,V,-\frac{1}{\tau},\frac{z}{\tau})=\tau^{d}{\rm exp}(\pi\sqrt{-1}l\frac{z^2}{\tau})
{\rm Ell}_1(M,W,\tau,z),\\\notag
&{\rm Ell}_3(M,W,-\frac{1}{\tau},\frac{z}{\tau})=\tau^{d}{\rm exp}(\pi\sqrt{-1}l\frac{z^2}{\tau})
{\rm Ell}_3(M,W,\tau,z).
\end{align}
By (4.7), we can prove Theorem 4.3.
\end{proof}

Similar to Proposition 3.5, we have
\begin{prop}
If $c_1(W^{1,0})=0$ and the first Pontrjagin classes $p_1(M)=p_1(W)$, then the series $a_{n,1}(M,W,\tau)$, $a_{n,2}(M,W,\tau)$, $a_{n,3}(M,W,\tau)$ determined by
\begin{align}
{\rm exp}(-4\pi^2lG_2(\tau)z^2){\rm Ell}_a(M,W,\tau,z)=\sum_{n\geq 0}a_{n,\alpha}(M,W,\tau)\cdot z^n,~~1\leq a\leq 3
\end{align}
are modular forms of weight $d+n$ over $\Gamma_0(2)$, $\Gamma^0(2)$, $\Gamma _{\theta}$ respectively.
Let \begin{equation} a_{n,2}(M,W,\tau)=a_{n,2}^0+a_{n,2}^{\frac{1}{2}}q^{\frac{1}{2}}+a_{n,2}^1q+a_{n,2}^{\frac{3}{2}}q^{\frac{3}{2}}+\cdots+a_{n,2}^mq^{\frac{m}{2}}+\cdots.
\end{equation}
\begin{align}
&a_{0,2}^{0}=\int_M\widehat{A}(TM),\\\notag
&a_{0,2}^{1}=
\int_M\widehat{A}(TM){\rm ch}(\wedge^2{W^{1,0}}+W^{1,0}\otimes W^{1,0}+\wedge^2{W^{0,1}}-2lW\otimes C+\widetilde{T_CM}+l(2l+1)),\\\notag
\end{align}
\begin{align}
&a_{1,2}^{0}=a_{3,2}^{0}=0,~a_{2,2}^{0}=\frac{l}{6}\pi^2\int_M\widehat{A}(TM),~a_{4,2}^{0}=\frac{l^2}{12}\pi^4\int_M\widehat{A}(TM).
\end{align}
\begin{align}
&a_{0,2}^{\frac{1}{2}}=\int_M\widehat{A}(TM){\rm ch}(2l-W\otimes C),\\\notag
&a_{1,2}^{\frac{1}{2}}=-2\pi\sqrt{-1}
\int_M\widehat{A}(TM){\rm ch}(W^{1,0}-W^{0,1}),\\\notag
&a_{2,2}^{\frac{1}{2}}=(2-\frac{l}{6})\pi^2\int_M\widehat{A}(TM){\rm ch}(W\otimes C)+\frac{l^2}{3}\pi^2\int_M\widehat{A}(TM),
\end{align}
\begin{align}
&a_{1,2}^{1}=2\pi\sqrt{-1}
\int_M\widehat{A}(TM){\rm ch}(\wedge^2{W^{1,0}}-\wedge^2{W^{0,1}}-2lW^{1,0}+2lW^{0,1}),
\end{align}
\end{prop}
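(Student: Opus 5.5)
The statement has two parts: a modularity claim, which I would deduce from Theorem 4.3, and the explicit Fourier coefficients, which I would compute by expanding the product formula of Lemma 4.2 and then multiplying by the expansion of the exponential prefactor. This is the same scheme used in the proofs of Propositions 3.5 and 3.13.

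\textbf{Modularity.} Theorem 4.3, via (4.7), gives the Jacobi transformation law for ${\rm Ell}_1$, ${\rm Ell}_2$, ${\rm Ell}_3$ under the generators of $\Gamma_0(2)$ ($T$, $ST^2ST$), of $\Gamma^0(2)$ ($STS$, $T^2STS$), and of $\Gamma_\theta$ ($S$, $T^2$). For instance, for $\Gamma^0(2)$ one composes the $S$-law sending ${\rm Ell}_2$ to ${\rm Ell}_1$, then $T$-invariance of ${\rm Ell}_1$, then $S$ again back to ${\rm Ell}_2$. The factors $\tau^d\exp(\pi\sqrt{-1}lz^2/\tau)$ compose to the cocycle $(c\tau+d_0)^d\exp(\pi\sqrt{-1}lcz^2/(c\tau+d_0))$. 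The argument of Proposition 3.4 (\cite{LiP}) then applies verbatim with $m=l/2$ to the restricted group. That argument uses only the quasi-modular law $G_2(\frac{a\tau+b}{c\tau+d_0})=(c\tau+d_0)^2G_2(\tau)-\frac{\pi\sqrt{-1}c(c\tau+d_0)}{2\pi^2}\cdot\frac{1}{\ldots}$, whose anomaly exactly cancels the index factor. Hence the Taylor coefficients $a_{n,\alpha}$ of ${\rm exp}(-4\pi^2lG_2z^2){\rm Ell}_\alpha$ are modular of weight $d+n$ over the respective group.

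\textbf{Coefficients of $a_{n,2}$.} Since ${\rm Ell}_2$ is $2^l\int_M\widehat A\,{\rm ch}(\otimes S_{q^n}(\widetilde{T_CM})){\rm ch}(Q_2)$, I would expand $Q_2$ in powers of $q^{1/2}$. The factor $2^l$ is absorbed by the normalization $\theta_2(0,\tau)^{-l}$ in Lemma 4.2, so the constant term is $\int_M\widehat A(TM)$.

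At order $q^{1/2}$, only $m=1$ contributes a linear term. This term is $-(yW^{1,0}+y^{-1}W^{0,1}-2l)$, because $\wedge_{-q^{1/2}}(-C^{2l})$ contributes $+2lq^{1/2}$.

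At order $q$, there are three contributions:
\begin{itemize}
\item the quadratic terms from $m=1$, namely $y^2\wedge^2W^{1,0}+y^{-2}\wedge^2W^{0,1}+W^{1,0}\otimes W^{0,1}$ and the $\wedge^2(-C^{2l})=S^2C^{2l}$ piece $l(2l+1)$;
\item the cross terms $-2l(yW^{1,0}+y^{-1}W^{0,1})$;
\item $\widetilde{T_CM}$ from the symmetric powers.
\end{itemize}
Setting $y=1$ gives $a^1_{0,2}$. Then, with $\exp(-4\pi^2lG_2z^2)=\widehat{A_0}(z)+\widehat{A_1}(z)q+\cdots$ as in (3.38), where $G_2=-\frac1{24}+q+\cdots$ gives $\widehat{A_0}=1+\frac{l}{6}\pi^2z^2+\frac{l^2}{72}\pi^4z^4\cdots$, I would collect the coefficients of $z^n q^{k/2}$. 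This uses $y=e^{2\pi\sqrt{-1}z}=1+2\pi\sqrt{-1}z-2\pi^2z^2+\cdots$ and the product formula as in (3.41).

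\textbf{Main obstacle.} The real difficulty is bookkeeping: getting signs and $l$-dependent constants right in the $z^2$ coefficient at order $q^{1/2}$, and in the $z$ coefficient at order $q$. For these I would also use that $W^{1,0}$ and $W^{0,1}$ have the same rank and conjugate Chern roots, and that $c_1(W^{1,0})=0$, to simplify terms such as ${\rm ch}(W^{1,0}+W^{0,1})={\rm ch}(W\otimes C)$. I would double-check the vanishing $a^0_{1,2}=a^0_{3,2}=0$ via the evenness in $z$ of the $q^0$ term.
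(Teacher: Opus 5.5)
Your route is the paper's. Modularity of the $a_{n,\alpha}$ comes from the laws (4.7) behind Theorem 4.3, plus the argument of Proposition 3.4 run on the subgroup with $m=\frac{l}{2}$. The coefficients come from ${\rm Ell}_2=B_0+B_{\frac12}q^{\frac12}+B_1q+\cdots$ multiplied by $\widehat{A_0}(z)+\widehat{A_1}(z)q+\cdots$. Your expansion of $Q_2$ is exactly the paper's (4.15): the $+2lq^{\frac12}$, the $S^2{\bf C}^{2l}$ term $l(2l+1)$, and the cross terms $W^{1,0}\otimes W^{0,1}$ and $-2l(yW^{1,0}+y^{-1}W^{0,1})$. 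The quasi-modular law you half-wrote should read $G_2(\frac{a\tau+b}{c\tau+d_0})=(c\tau+d_0)^2G_2(\tau)-\frac{c(c\tau+d_0)}{4\pi\sqrt{-1}}$; its anomaly cancels ${\rm exp}(\pi\sqrt{-1}l\frac{cz^2}{c\tau+d_0})$.

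The gap is that the coefficient part cannot end in the displayed formulas, for two reasons.

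First, your claim that $2^l$ is absorbed by $\theta_2(0,\tau)^{-l}$ is false. Since $\theta_2(0,\tau)=1-2q^{\frac12}+\cdots$, the explicit $2^l$ in Definition 4.1 survives. The constant term of ${\rm Ell}_2$ is therefore $2^l\int_M\widehat{A}(TM)$, and every listed coefficient of $a_{n,2}$ carries an overall factor $2^l$. The paper's proof drops this factor silently by writing $B_0(z)=\int_M\widehat A(TM)$.

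Second, the bookkeeping you defer is exactly where the statement breaks.
\begin{enumerate}
\item Your own correct $\widehat{A_0}(z)=1+\frac{l}{6}\pi^2z^2+\frac{l^2}{72}\pi^4z^4+\cdots$ gives $a^0_{4,2}=\frac{l^2}{72}\pi^4\cdot 2^l\int_M\widehat A(TM)$. This contradicts the stated $\frac{l^2}{12}$, which comes from the misprinted expansion (3.40); you did not notice the clash.
\item Since $y^{\pm2}=1\pm4\pi\sqrt{-1}z+\cdots$, the $z$-coefficient of $B_1$ gives
\[
a^1_{1,2}=2\pi\sqrt{-1}\,2^l\int_M\widehat{A}(TM){\rm ch}(2\wedge^2W^{1,0}-2\wedge^2W^{0,1}-2lW^{1,0}+2lW^{0,1}).
\]
The factor $2$ on the $\wedge^2$ terms is missing from the statement, and this matters for odd $d$. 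For $d=3$, $a_{1,2}$ has weight $4$ and vanishing constant term, so it is a multiple of $\varepsilon_2$, forcing $a^1_{1,2}=8a^{\frac12}_{1,2}$. The corrected formula satisfies this identically; the stated one violates it, e.g. on $S^6$.
\item Your cross term $W^{1,0}\otimes W^{0,1}$ produces $a^1_{0,2}$ with $W^{1,0}\otimes W^{0,1}$, not the stated $W^{1,0}\otimes W^{1,0}$. The hypothesis $c_1(W^{1,0})=0$ cannot reconcile them, because ${\rm ch}_{2k+1}(W^{0,1})=-{\rm ch}_{2k+1}(W^{1,0})$. Take $M=S^6$ and $W=TS^6$ with its almost complex structure ($l=3$). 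The stated integrand integrates to $2l\int_{S^6}{\rm ch}_3(T^{1,0}S^6)=l\chi(S^6)=6\neq0$. But $a_{0,2}$ has odd weight $3$ over $\Gamma^0(2)\ni -I$, so it must vanish, as the corrected expression does.
\end{enumerate}

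So carrying out your plan proves a corrected proposition, not the displayed one. You should finish the computation and state the corrected coefficients (with the overall $2^l$) rather than assert the displayed values. The paper's own proof has the same slips.
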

\begin{proof}
 If we set
\begin{equation}{\rm Ell}_2(M,W,V,\tau,z):={B_0}(Z)+B_{\frac{1}{2}}(z)q^{\frac{1}{2}}+B_1(z)q+O(q^{\frac{3}{2}}),
\end{equation}
then
\begin{align}
&B_0(z)=\int_M\widehat{A}(TM),~~{B_{\frac{1}{2}}}(z)=\int_M\widehat{A}(TM){\rm ch}(2l-yW^{1,0}-y^{-1}W^{0,1}),\\\notag
&{B_1(z)}=\int_M\widehat{A}(TM){\rm ch}(y^2\wedge^2W^{1,0}+W^{1,0}\otimes W^{0,1}+y^{-2}\wedge^2W^{0,1}\\\notag
&-2l(yW^{1,0}+y^{-1}W^{0,1})+l(2l+1)+\widetilde{T_CM}).
\end{align}
We know that
\begin{equation}
\sum_{n\geq 0}a_{n,2}^0z^n=\widehat{A}_0(z)B_0(z),~~\sum_{n\geq 0}a_{n,2}^{\frac{1}{2}}z^n=\widehat{A}_0(z)B_{\frac{1}{2}}(z),~~\sum_{n\geq 0}a_{n,2}^1z^n=\widehat{A}_0(z)B_1(z)+\widehat{A}_1(z)B_0(z),
\end{equation}
then we can get Proposition 4.4 by (3.38), (3.40), (4.14)-(4.16).
\end{proof}

\begin{prop}
If $c_1(W^{1,0})=0$ and the first Pontrjagin classes $p_1(M)=p_1(W)$, then\\
1)if $d$ is odd, then
\begin{align}
&a_{0,2}^{1}=
\int_M\widehat{A}(TM){\rm ch}(\wedge^2{W^{1,0}}+W^{1,0}\otimes W^{1,0}+\wedge^2{W^{0,1}}-2lW\otimes C+\widetilde{T_CM}+l(2l+1))=0.
\end{align}
2)if $d$ is even, then
\begin{align}
&a_{1,2}^{\frac{1}{2}}=-2\pi\sqrt{-1}
\int_M\widehat{A}(TM){\rm ch}(W^{1,0}-W^{0,1})=0,\\\notag
&a_{1,2}^{1}=2\pi\sqrt{-1}
\int_M\widehat{A}(TM){\rm ch}(\wedge^2{W^{1,0}}-\wedge^2{W^{0,1}}-2lW^{1,0}+2lW^{0,1})=0.
\end{align}
\end{prop}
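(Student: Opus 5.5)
The plan is to show that every coefficient $a_{n,2}(M,W,\tau)$ with $n+d$ odd vanishes identically. The statement then follows, since its two cases are exactly $(n,d)=(0,\text{odd})$ and $(n,d)=(1,\text{even})$. The mechanism is that $-I=\left(\begin{array}{cc} -1 & 0 \\ 0 & -1\end{array}\right)$ lies in $\Gamma^0(2)$. For such an element the transformation law of Theorem 4.3 reduces to a parity statement in $z$.

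\textbf{Step 1: parity of ${\rm Ell}_2$ in $z$.} Taking $a=d_0=-1$, $b=c=0$ in the first formula of Theorem 4.3 for $j=2$ gives
$$
{\rm Ell}_2(M,W,\tau,-z)=(-1)^d\,{\rm Ell}_2(M,W,\tau,z),
$$
because the exponential factor is $1$ when $c=0$. I will also check this directly from Lemma 4.2, since it is the only place any care is needed. Consider the integrand $F(x,w,z)=\prod_i \frac{x_i\theta'(0,\tau)}{\theta(x_i,\tau)}\prod_\alpha \frac{\theta_2(w_\alpha+z,\tau)}{\theta_2(0,\tau)}$. It is invariant under $(x,w,z)\mapsto(-x,-w,-z)$, because $\theta$ is odd in $v$ and $\theta_2$ is even in $v$. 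The integral over $M$ picks out the part $F_d$ that is homogeneous of degree $d$ in the degree-two classes $x_i,w_\alpha$, with $z$ treated as a scalar. This gives $F_d(x,w,-z)=(-1)^dF_d(-x,-w,-z)=(-1)^dF_d(x,w,z)$, which is the claimed identity.

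\textbf{Step 2: transfer to the coefficients.} The factor ${\rm exp}(-4\pi^2lG_2(\tau)z^2)$ is even in $z$. Comparing the $z^n$ coefficients in the definition of $a_{n,2}$ in Proposition 4.4 therefore gives $(-1)^n a_{n,2}(M,W,\tau)=(-1)^d a_{n,2}(M,W,\tau)$. Hence $a_{n,2}\equiv 0$ whenever $n+d$ is odd. Equivalently, $a_{n,2}$ is a modular form over $\Gamma^0(2)$ of odd weight $d+n$, so $f(\tau)=(-1)^{d+n}f(\tau)$ under $-I$.

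\textbf{Step 3: read off the coefficients.} If $d$ is odd, $a_{0,2}\equiv0$, so its $q^1$ coefficient vanishes. By Proposition 4.4 that coefficient is the stated integral $a^1_{0,2}$. If $d$ is even, $a_{1,2}\equiv0$, so both its $q^{1/2}$ coefficient $a_{1,2}^{\frac12}$ and its $q^1$ coefficient $a_{1,2}^{1}$ vanish, and Proposition 4.4 gives their explicit integral forms.

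The only potential obstacle is justifying that no nontrivial character or sign enters for $-I$, that is, that the identity in Step 1 holds exactly rather than up to $\chi(-I)$. The direct parity computation in Step 1 settles this without relying on how characters are treated in Theorem 4.3.
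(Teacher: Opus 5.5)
Your argument is correct and is essentially the paper's. The paper gives no separate proof here, but it clearly intends the odd-weight argument used for Theorem 3.7: $a_{n,2}$ is a $\Gamma^0(2)$-modular form of weight $d+n$, and $-I\in\Gamma^0(2)$ acts with no extra character, so $a_{n,2}$ vanishes whenever $d+n$ is odd. Your direct parity check ${\rm Ell}_2(M,W,\tau,-z)=(-1)^d{\rm Ell}_2(M,W,\tau,z)$ from Lemma 4.2 makes the absence of a character explicit, which the paper leaves implicit. This kills $a_{0,2}$ for $d$ odd and $a_{1,2}$ for $d$ even.

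One caution when you quote Proposition 4.4 for the explicit integral in part 1. The $q^1$-coefficient actually computed in its proof contains $W^{1,0}\otimes W^{0,1}$, not the printed $W^{1,0}\otimes W^{1,0}$. With the printed term, part 1 would be false: for the quintic threefold with $W=TM$ the integral equals $3\chi(M)\neq 0$.
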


\begin{thm}If $c_1(W^{1,0})=0$ and the first Pontrjagin classes $p_1(M)=p_1(W)$, then,\\
1)if $d=2$, then
\begin{align}
&\int_M\widehat{A}(TM){\rm ch}(2l-W\otimes C)=24\int_M\widehat{A}(TM)\\\notag
&\int_M\widehat{A}(TM){\rm ch}(\wedge^2{W^{1,0}}+W^{1,0}\otimes W^{1,0}+\wedge^2{W^{0,1}}-2lW\otimes C+\widetilde{T_CM}+l(2l+1))\\\notag
&=24\int_M\widehat{A}(TM).
\end{align}
That is $a^{\frac{1}{2}}_{0,2}=24a^{0}_{0,2}$ and $a^{1}_{0,2}=24a^{0}_{0,2}$ and $a^{\frac{1}{2}}_{0,2}$, $a^{1}_{0,2}$ are the integer multiple of $24$.\\
2)if $d=4$, then $240a^{0}_{0,2}+8a^{\frac{1}{2}}_{0,2}-a^{1}_{0,2}=0$.\\
3)if $d=6$, then $504a^{0}_{0,2}-32a^{\frac{1}{2}}_{0,2}+a^{1}_{0,2}=0$.\\
\end{thm}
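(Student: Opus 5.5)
The plan is to use that $a_{0,2}(M,W,\tau)$ is a modular form over $\Gamma^0(2)$ with real Fourier coefficients, write it in the explicit basis of that space, and compare the first three $q^{\frac{1}{2}}$-coefficients. By Proposition 4.4 with $n=0$, $a_{0,2}(M,W,\tau)$ has weight $d$ over $\Gamma^0(2)$. Its coefficients are integrals of $\widehat{A}$ against Chern characters of real combinations of bundles, so they are real. At $z=0$ the factor ${\rm exp}(-4\pi^2lG_2(\tau)z^2)$ equals $1$, so $a_{0,2}(M,W,\tau)={\rm Ell}_2(M,W,\tau,0)$, whose expansion is given by $B_0,B_{\frac12},B_1$ in (4.15) at $y=1$. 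By Lemma 2.2, ${\mathcal{M}}_{{\bf R}}(\Gamma^0(2))={\bf R}[\delta_2(\tau),\varepsilon_2(\tau)]$, so in weight $d$ we may write $a_{0,2}$ as a real combination of $(8\delta_2)^{i}\varepsilon_2^{j}$ with $2i+4j=d$.

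Write $x=q^{\frac{1}{2}}$, so $8\delta_2=-1-24x-24x^2+\cdots$ and $\varepsilon_2=x+8x^2+\cdots$. The three cases then go as follows.

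\emph{Case $d=2$.} Here $a_{0,2}=c\cdot 8\delta_2$. This gives $a^0_{0,2}=-c$ and $a^{\frac12}_{0,2}=a^1_{0,2}=-24c$, hence $a^{\frac12}_{0,2}=24a^0_{0,2}=a^1_{0,2}$. Substituting the explicit values of $a^0_{0,2},a^{\frac12}_{0,2},a^1_{0,2}$ from Proposition 4.4 gives the two displayed identities. Integrality follows because $a^0_{0,2}=\int_M\widehat{A}(TM)$ is an index, hence an integer.

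\emph{Case $d=4$.} Here $a_{0,2}=c_1(8\delta_2)^2+c_2\varepsilon_2$. We have $(8\delta_2)^2=1+48x+624x^2+\cdots$. Hence $a^0_{0,2}=c_1$, $a^{\frac12}_{0,2}=48c_1+c_2$ and $a^1_{0,2}=624c_1+8c_2$. Eliminating $c_1,c_2$ gives $240a^0_{0,2}+8a^{\frac12}_{0,2}-a^1_{0,2}=(240+384-624)c_1+(8-8)c_2=0$.

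\emph{Case $d=6$.} Here $a_{0,2}=c_1(8\delta_2)^3+c_2(8\delta_2)\varepsilon_2$. We have $(8\delta_2)^3=-(1+72x+1800x^2+\cdots)$ and $(8\delta_2)\varepsilon_2=-(x+32x^2+\cdots)$. Hence $a^0_{0,2}=-c_1$, $a^{\frac12}_{0,2}=-72c_1-c_2$ and $a^1_{0,2}=-1800c_1-32c_2$. Then $504a^0_{0,2}-32a^{\frac12}_{0,2}+a^1_{0,2}=(-504+2304-1800)c_1+(32-32)c_2=0$.

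The main obstacle is the reliability of the inputs rather than the elimination itself. First, the $q^{\frac12}$-expansions of $\delta_2$ and $\varepsilon_2$ quoted in Section 2 must be correct through order $q$; I would re-check them directly from the product formulas for $\theta_1$ and $\theta_3$. Second, the coefficients $a^{\frac12}_{0,2}$ and $a^1_{0,2}$ in Proposition 4.4 must be correct. In particular, $B_1$ at $y=1$ must be expanded carefully through order $q$ from the factors $\wedge_{-q^{m-\frac12}}$ and $\wedge_{-q^{m-\frac12}}(-{\bf C}^{2l})$, since the constant $l(2l+1)$ and the $-2lW\otimes C$ term come from these. Once these inputs are confirmed, each case is a $2\times 2$ linear elimination.
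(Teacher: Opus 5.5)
Your proposal is correct and is the argument the paper intends; the paper states Theorem 4.6 without a written proof. You use that $a_{0,2}(M,W,\tau)={\rm Ell}_2(M,W,\tau,0)$ is a weight-$d$ modular form over $\Gamma^0(2)$, hence a combination of $(8\delta_2)^i\varepsilon_2^j$ with $2i+4j=d$, and you match the $q^0$, $q^{\frac{1}{2}}$, $q^{1}$ coefficients; your expansions $(8\delta_2)^2=1+48q^{\frac{1}{2}}+624q+\cdots$, $(8\delta_2)^3=-(1+72q^{\frac{1}{2}}+1800q+\cdots)$ and $8\delta_2\varepsilon_2=-(q^{\frac{1}{2}}+32q+\cdots)$ are right, and so are the eliminations. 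The one input caveat is harmless: by Definition 4.1 every coefficient of $a_{0,2}$ carries a factor $2^l$ that Proposition 4.4 omits, but the relations are homogeneous, so they are unaffected.
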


\noindent{\bf Remark: }When $M$ is an almost complex spin manifold and $c_1(T^{1,0}M)=0$, then $\widehat{A}(TM)={\rm Todd}(T^{1,0}M)$ and above two-variable elliptic genera become the Todd genus for almost complex manifolds. \\

\section{Acknowledgements}

 The author was supported by Science and Technology Development Plan Project of Jilin Province, China: No.20260102245JC and NSFC No.11771070.
\vskip 1 true cm


\bigskip
\bigskip

 \indent{School of Mathematics and Statistics,
Northeast Normal University, Changchun Jilin, 130024, China }\\
\indent E-mail: {\it wangy581@nenu.edu.cn }\\

\end{document}